\def\authorsaddresses#1{\dedicatory{#1}}
\newtheorem{theorem}{Theorem}[section]
\newtheorem{corollary}[theorem]{Corollary}
\theoremstyle{definition}
\newtheorem{definition}[theorem]{Definition}
\newtheorem{example}[theorem]{Example}
\theoremstyle{remark}
\theoremstyle{approach}
\numberwithin{equation}{section}
\begin{document}
\setcounter{page}{1}

%%%%%%%%%%%%%%%%%%%%%%%%%%%%%%%%%%%%%%%%%%%%%%%
%% Please do not remove the following statement.
%%%%%%%%%%%%%%%%%%%%%%%%%%%%%%%%%%%%%%%%%%%%%%%
%\noindent \textbf{{\footnotesize By submitting this extended
%abstract to IMC44 I \textcolor[rgb]{1.00,0.00,0.00}{confirm }that
%(i) I and any other coauthor(s) are responsible for its content and
%its originality; (ii) any possible coauthors agreed to its submission to IMC44.}}\\[1.00in]
%%%%%%%%%%%%%%%%%%%%%%%%%%%%%%%%%%%%%%%%%%%%%%%
%% ==>start from here
%% Start from here
%%%%%%%%%%%%%%%%%%%%%%%%%%%%%%%%%%%%%%%%%%%%%%%

%%%%%%%%%%%%%%%%%%%%%%%%%%%%%%%%%%%%%%%%%%%%%%%%%%%%%%%%%%%%%%%%%%%%%
% Insert title of your article. Note: \title[short title]{full title}
%%%%%%%%%%%%%%%%%%%%%%%%%%%%%%%%%%%%%%%%%%%%%%%%%%%%%%%%%%%%%%%%%%%%%
\title[frame related operators for woven frames]{frame related operators for woven frames}
%%%%%%%%%%%%%%%%%%%%%%%%%%%%%%%%%%%%%%
% Author's name must be inserted here
%%%%%%%%%%%%%%%%%%%%%%%%%%%%%%%%%%%%%%

\author[A. Rahimi, Z. Samadzadeh, B. Daraby]{A. Rahimi$ ^{*} $, Z. Samadzadeh$ ^{**} $, B. Daraby$ ^{\dagger} $}

%%%%%%%%%%%%%%%%%%%%%%%%
% Addresses
%%%%%%%%%%%%%%%%%%%%%%%%
\authorsaddresses{Department ~of Mathematics, University ~of Maragheh, Maragheh, Iran.}
%Email: m.mousavi@azaraniv.edu }
%%%%%%%%%%%%%%%%%%%%%%%%%%%%
% Subject class; see http://www.ams.org/mathscinet/msc/msc2010.html
%%%%%%%%%%%%%%%%%%%%%%%%%%%%,
%\subjclass{47B33, 46E22, 31C25}
%%%%%%%%%%%%%%%%%%%%%%%%%%%%%%%%%%%%%%%%%%%%%%%%%%%%%%%%%%%%%%%%%%%%%%%%%%%%%%%%%%%%%%%%%%
% keywords: Note that the number of keywords must be at least 3 items and at most 5 items.
%%%%%%%%%%%%%%%%%%%%%%%%%%%%%%%%%%%%%%%%%%%%%%%%%%%%%%%%%%%%%%%%%%%%%%%%%%%%%%%%%%%%%%%%%%
\keywords{Analysis operator, Frame operator, Synthesis operator, Woven frames, Sum of wovens.\\
$ ^{*} $rahimi@maragheh.ac.ir;~$ ^{**} $z.samadzadeh@yahoo.com;~
$ ^{\dagger} $bdaraby@maragheh.ac.ir.\\AMS Subject Classification: 42C15}
%$^{**}$Corresponding Author,\\}
%%%%%%%%%%%%%%%%%%%%%%%%%%%%%%%%%%%%%%%%%%%%
% Abstract  of your extended abstract
% Note: the abstract should be 200 words or less with no reference number therein.
%The speaker is responsible for the proper formatting his/her talk by using the style
%file of the booklet of abstracts.
%%%%%%%%%%%%%%%%%%%%%%%%%%%%%%%%%%%%%%%%%%%
\begin{abstract}
A new notion in frame theory has been introduced recently under the name woven-weaving frames by Bemrose et. al. In the studying of frames, some operators like analysis, synthesis, Gram and frame operator play the central role. In this paper, for the first time, we introduce and define these operators for woven-weaving frames and review some properties of them. In continuation, we investigate the effect of different types of operators on the woven frames and their bounds. Also, we provide some conditions that shows  sum of woven frames are also woven frames. Finally, we apply these properties with an example.
\end{abstract}

%%%%%%%%%%%%%%%%%%%%%%%
%%%%%%%%%%%%%%%%%%%%%%%
\maketitle

\section{Introduction}
The theory of frames plays an important role in signal processing because of their resilience to quantization \cite{goyal1}, resilience to additive noise, as well as their numerical stability of reconstruction and greater freedom to capture signal characteristics. Also frames have been used in sampling theory to oversampled perfect reconstruction filter banks, system modeling, neural networks and quantum measurements \cite{eldar}. New applications in image processing, robust transmission over the internet and wireless \cite{goyal2}, coding and communication \cite{strohmer} were given.

Discrete frames in Hilbert spaces has been introduced by Duffin and Schaeffer \cite{duffin} and popularized by Daubechies, Grossmann and Meyer \cite{meyer}. A discrete frame is a countable family of elements in a separable Hilbert space which allows stable and not necessarily unique decompositions of arbitrary elements in an expansion of frame elements.
\par
The last two decades have seen tremendous activity in the development of frame theory and many generalizations of frames have come into existence which include bounded quasi-projectors \cite{14}, fusion frames \cite{8}, pseudo-frames \cite{21}, oblique frames \cite{10}, g-frames \cite{30}, continuous frames \cite{27}, K-frames \cite{15}, fractional calculus \cite{16,17}, Hilbert-–Schmidt frames \cite{29} and etc.

%The first time, Duffin and Schaeffer \cite{duffin} introduced frames in Hilbert space in the concept of nonharmonic Fourier series. A frame as well as an orthonormal basis allows each element in Hilbert space to be written as an infinite linear combinations of the frame elements, so that unlike the bases conditions, the coefficients might not be unique. Frames did not seem to develop much interest, until Daubechies, Grossmann and Meyer \cite{meyer} introduced their studies in 1986. During the last thirty years, the frame theory has been growing rapidly, especially after that Daubechies et. al introduced their studies. Thereafter, the frame theory be used in more applications such as: Signal processing, image processing, data compression, sampling theory and several applications.

In one of the direction of applications of frames in signal processing, a new concept of woven-weaving frames
in a separable Hilbert space introduced by Bemrose et. al. \cite{wov1, wov2}. From the perspective of its introducers, woven frames has potential applications in wireless sensor networks that require distributed data and signal processing. In the field of mathematics of weaving frames, Arefijamaal \cite{jamal1}, Vashisht, Deepshikha and Garg \cite{vd1, vd3, vd4, vd2, vd5} have done more meritorious studies.
By the concepts of weaving, we introduce related operators for weaving and woven frames, and investigate properties of this operators. Also, we study some features of woven frames in Hilbert spaces.

This paper is organized as follows: Section 2 contains the basic definitions of frames and woven frames. Also, we bring some properties of this type of frames that are useful for our study. In section 3, we introduce analysis, synthesis and frame operators of woven and weaving frames and study some properties of these operators. Also some behaviour of woven frames and their sum in Hilbert spaces will be studied in Section 3.

Throughout this paper, $ \mathcal{H} $ is a separable Hilbert space and $ \mathbb{I} $ is the indexing set such that can be finite or infinite countable set. Also, $ [m] $ is the natural numbers set $ \left\lbrace 1, 2, ..., m\right\rbrace $.

\section{Frames and Woven Frames}
As a preliminarily of frames, at the first, we mention some prerequisites of discrete frames and woven-weaving frames in this section.
\subsection{Discrete frame}
We recall definition and some properties of frames that we need in throughout of the paper. For a comprehensive treatment of frame theory, we refer to the excellent textbook by Christensen \cite{ole}.

%Christensen in \cite{ole} has been presented more information about discrete frames.
\begin{definition}
A countable sequence of elements $ \left\lbrace f_{i}\right\rbrace _{i\in\mathbb{I} } $ in $ \mathcal{H}$ is a frame for $ \mathcal{H} $, if there exist constants
$ 0<A, B<\infty $ such that:
\begin{eqnarray}
\label{1}
A\Vert f\Vert^{2} \leq\sum_{i\in\mathbb{I}}\vert\left\langle f, f_{i}\right\rangle \vert^{2} \leq B\Vert f\Vert^{2}, \quad \forall f\in\mathcal{H}.
\end{eqnarray}
The numbers $ A $ and $ B $ are celled frame bounds. The frame
$ \left\lbrace f_{i}\right\rbrace _{i\in\mathbb{I} } $ is called tight frame, if
$ A=B $ and is called Parseval frame if
$ A=B=1 $.
Also the sequence $ \left\lbrace f_{i}\right\rbrace _{i\in\mathbb{I} } $
is called Bessel sequence, if the upper inequality in (\ref{1}) holds.
\end{definition}
For a frame in $ \mathcal{H} $, we define the mapping:
$$ U: \mathcal{H}\longrightarrow\ell^{2}\left(\mathbb{I} \right),\quad U(f)=\left\lbrace \left\langle f, f_{i}\right\rangle \right\rbrace _{i\in\mathbb{I}}. $$
The operator $ U $ is usually called the analysis operator. The adjoint operator of $ U $ is given by:
$$ T:\ell^{2}(\mathbb{I})\longrightarrow\mathcal{H},\quad T\left\lbrace c_{i}\right\rbrace=\sum_{i\in\mathbb{I}}c_{i}f_{i}, $$
and is called the synthesis operator.
By composing $ U $ and $ T $, we obtain the frame operator:
$$ S:\mathcal{H}\longrightarrow\mathcal{H},\quad S(f)= TU(f)=\sum_{i\in\mathbb{I}}\left\langle f, f_{i}\right\rangle f_{i}.$$
The operator $ S $ is positive, self-adjoint and invertible and every $ f\in\mathcal{H} $ can be represented as:
$$ f=\sum_{i\in\mathbb{I}}\left\langle f, S^{-1}f_{i}\right\rangle f_{i}=\sum_{i\in\mathbb{I}}\left\langle f, f_{i}\right\rangle S^{-1}f_{i}. $$
The family $ \left\lbrace S^{-1}f_{i}\right\rbrace _{i\in\mathbb{I}} $ is said the canonical dual frame of
$ \left\lbrace f_{i}\right\rbrace _{i\in\mathbb{I}} $.
\subsection{Woven frames}
Woven frames in Hilbert spaces were introduced by Bemrose et al. \cite{wov1, wov3, wov2} in 2015. After that, Vashisht,  Deepshikha, Arefijamaal and etc. have done more studies over the past few years \cite{jamal1, vd3, vd4, vd2}. In the following, we briefly mention definition of woven frames by presenting an example.
 \begin{definition}
 Let $ F=\left\lbrace f_{ij}\right\rbrace _{i\in\mathbb{I} } $ for $ j\in\left[ m\right]  $ is a family of frames for the separable Hilbert space
 $ \mathcal{H} $. If there exist universal constants $ C$ and $D $, such that for every partition
$ \left\lbrace \sigma_{j}\right\rbrace _{j\in\left[ m\right] } $ of $ \mathbb{I} $ and for every $ j\in[m] $, the family
$ F_{j}=\left\lbrace f_{ij}\right\rbrace _{i\in\sigma_{j}} $ is a frame for
$ \mathcal{H} $ with bounds
$ C$ and $D $, then $ F $ is said a woven frames. For every $ j\in\left[ m\right]  $, the frames
$ F_{j}=\left\lbrace f_{ij}\right\rbrace _{i\in\sigma_{j}} $ are said a weaving frame.

The constants $ C $ and $ D $ are called the lower and upper woven frame bounds. If
$ C=D $, then
$ F=\left\lbrace f_{ij}\right\rbrace _{i\in\mathbb{I}, j\in\left[ m\right] } $ is said a tight woven frame and if for every
$ j\in\left[ m\right]  $, the family $ F_{j}=\left\lbrace f_{ij}\right\rbrace _{i\in\sigma_{j}} $ is a Bessel sequence, then
the family $ F=\left\lbrace f_{ij}\right\rbrace _{i\in\mathbb{I},j\in\left[ m\right] } $ is said a Bessel woven.
 \end{definition}
In the continuation, we dissect woven and weaving frames for $ m=2 $.
\begin{example}
Let $ \left\lbrace e_{i}\right\rbrace _{i=1}^{2} $ be an orthonormal basis for the two dimensional vector space
$ V=\overline{\rm span}\left\lbrace e_{i}\right\rbrace _{i=1}^{2} $ with inner product and suppose that $ F $ and
$ G $ are the sets:
$$ F=\left\lbrace 2e_{1},2e_{2}-e_{1},3e_{2}\right\rbrace \quad ,\quad G=\left\lbrace 2e_{1},2e_{1}+e_{2},2e_{2}\right\rbrace . $$
Since both of $ F $ and $ G $ span the space $ V $, then those are frames. For obtaining their bounds, we have:
\begin{eqnarray*}
\sum_{i=1}^{3}\left\vert \left\langle f, f_{i}\right\rangle \right\vert^{2}&=&
\left\vert \left\langle f, 2e_{1}\right\rangle \right\vert^{2}+\left\vert \left\langle f, 2e_{2}-e_{1}\right\rangle \right\vert^{2}+
\left\vert \left\langle f, 3e_{2}\right\rangle \right\vert^{2},
\end{eqnarray*}
then
$$ 4 \Vert f\Vert^{2}\leq\sum_{i=1}^{3}\left\vert \left\langle f, f_{i}\right\rangle \right\vert^{2}\leq 17\Vert f\Vert^{2}, $$
thus $ F=\left\lbrace f_{i}\right\rbrace_{i=1}^{3}  $ is a frame for $ V $ with lower bound $ 4 $ and upper bound $ 17 $. Similarly,
$ G=\left\lbrace g_{i}\right\rbrace_{i=1}^{3} $ is a frame for $ V $ with frame bounds $ 4 $ and $ 9 $. The sets $ F $ and $ G $ are woven frames for $ V $. For example, if $ \sigma_{1}=\left\lbrace 1,2\right\rbrace  $, then for every $ f\in V $, we have
$$ 4\left\Vert f\right\Vert^{2}\leq \sum_{i\in\sigma_{1}}\left\vert \left\langle f, f_{i}\right\rangle \right\vert^{2}+\sum_{i\in\sigma_{1}^{c}}\left\vert \left\langle f, g_{i}\right\rangle \right\vert^{2}\leq 12\left\Vert f\right\Vert^{2} . $$
So $ \left\lbrace f_{i}\right\rbrace _{i\in\sigma_{1}}\bigcup \left\lbrace g_{i}\right\rbrace _{i\in\sigma_{1}^{c}} $
is a weaving frame with bounds $ C_{1}=4 $ and $ D_{1}=12 $.\\
 Now, if we take:
$$ C=\min \left\lbrace  C_{i};\quad 1\leq i\leq 8\right\rbrace \quad, \quad D=\max \left\lbrace  D_{i};\quad 1\leq i\leq 8\right\rbrace , $$
then $ F $ and $ G $ form a woven frames for $ V $ with universal bounds $ C $ and $ D $.
\section{Related Operators for Woven and Weaving Frames}
In this section, for first time, we introduce analysis, synthesis and frame operators of weaving and woven frames. As a prerequisite for this operators, we define the following space.

\end{example}
%\begin{flushleft}
%{\LARGE Notation:}
%\end{flushleft}
%\subsection{Some prerequisites}

For each family of subspaces $ \left\lbrace\left( \ell^{2}(\mathbb{I} )\right)_{j}  \right\rbrace _{j\in[m]} $ of
$ \ell^{2}(\mathbb{I} ) $, we have
$$ \left( \ell^{2}(\mathbb{I} )\right)_{j}=\left\lbrace \left\lbrace c_{ij}\right\rbrace _{i\in\sigma_{j}}\vert ~c_{ij}\in \mathbb{C} ~,~\sigma_{j}\subset \mathbb{I},~ \sum_{i\in\sigma_{j}}\left\vert c_{i}\right\vert^{2}<\infty\right\rbrace \quad,\quad \forall j\in [m]. $$
We define the space:
$$ \left( \sum_{j\in[m]}\bigoplus \left( \ell^{2}(\mathbb{I} )\right)_{j}\right) _{\ell_{2}}=\left\lbrace \left\lbrace c_{ij}\right\rbrace _{i\in\mathbb{I},j\in[m]}\vert  \left\lbrace c_{ij}\right\rbrace _{i\in\mathbb{I}}\in \left( \ell^{2}(\mathbb{I} )\right)_{j}, \forall j\in [m]\right\rbrace , $$
with the inner product
$$ \left\langle \left\lbrace c_{ij}\right\rbrace _{i\in\mathbb{I}, j\in[m]}, \left\lbrace c^{\prime }_{ij}\right\rbrace _{i\in\mathbb{I},j\in[m]}\right\rangle =\sum_{i\in\mathbb{I},j\in[m]} \left\vert c_{ij}\overline{c_{ij}^{\prime}}\right\vert , $$
it is easy to show that this space is a Hilbert space.
\begin{theorem}
\label{T8}
The family $ \left\lbrace f_{ij}\right\rbrace _{i\in\mathbb{I}, j\in\left[ m\right]} $ is a Bessel woven if and only if the operator
$$  T_{F}:\left( \sum_{j\in[m]}\bigoplus \left( \ell^{2}(\mathbb{I} )\right)_{j}\right) _{\ell_{2}}\longrightarrow\mathcal{H}\quad ,\quad T_{F}\left\lbrace c_{ij}\right\rbrace _{i\in\mathbb{I},j\in[m]}=\sum_{i\in\mathbb{I},j\in[m]}c_{ij}f_{ij} $$
 is well defined, linear and bounded.
\end{theorem}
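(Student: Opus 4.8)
The plan is to reduce the statement to the classical fact that the synthesis operator of a Bessel sequence in $\mathcal{H}$ is precisely a well-defined bounded map from $\ell^{2}$ into $\mathcal{H}$ (see \cite{ole}), the bridge being an elementary observation about the weaving structure. First I would record that $F=\{f_{ij}\}_{i\in\mathbb{I},j\in[m]}$ is a Bessel woven --- i.e. every weaving $\bigcup_{j\in[m]}\{f_{ij}\}_{i\in\sigma_{j}}$ is Bessel with a common bound $D$ --- if and only if the combined family $\{f_{ij}\}_{(i,j)\in\mathbb{I}\times[m]}$ is an ordinary Bessel sequence in $\mathcal{H}$. One direction is immediate: passing to any partition $\{\sigma_{j}\}_{j\in[m]}$ only deletes terms, so $\sum_{j\in[m]}\sum_{i\in\sigma_{j}}|\langle f,f_{ij}\rangle|^{2}\le\sum_{i\in\mathbb{I},j\in[m]}|\langle f,f_{ij}\rangle|^{2}$ for every $f$. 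For the converse, fix $f$, choose for each $i\in\mathbb{I}$ an index $j(i)\in[m]$ at which $|\langle f,f_{ij}\rangle|$ is largest, and set $\sigma_{j}=\{\,i:j(i)=j\,\}$; then
$$\sum_{i\in\mathbb{I},j\in[m]}|\langle f,f_{ij}\rangle|^{2}\le m\sum_{i\in\mathbb{I}}\max_{j\in[m]}|\langle f,f_{ij}\rangle|^{2}=m\sum_{j\in[m]}\sum_{i\in\sigma_{j}}|\langle f,f_{ij}\rangle|^{2}\le mD\|f\|^{2}.$$
Since the norm on $\big(\sum_{j\in[m]}\bigoplus(\ell^{2}(\mathbb{I}))_{j}\big)_{\ell_{2}}$ is $\|\{c_{ij}\}\|^{2}=\sum_{i\in\mathbb{I},j\in[m]}|c_{ij}|^{2}$, I will freely identify this domain with the corresponding $\ell^{2}$-space, on which the canonical unit vectors $\delta_{(i,j)}$ form an orthonormal basis.

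For the ``if'' direction, suppose $F$ is a Bessel woven; by the observation above there is $B<\infty$ with $\sum_{i,j}|\langle g,f_{ij}\rangle|^{2}\le B\|g\|^{2}$ for all $g\in\mathcal{H}$. For a finitely supported sequence $\{c_{ij}\}$ and any $g\in\mathcal{H}$, Cauchy--Schwarz gives
$$\Big|\Big\langle\sum_{i,j}c_{ij}f_{ij},\,g\Big\rangle\Big|=\Big|\sum_{i,j}c_{ij}\langle f_{ij},g\rangle\Big|\le\Big(\sum_{i,j}|c_{ij}|^{2}\Big)^{1/2}\Big(\sum_{i,j}|\langle f_{ij},g\rangle|^{2}\Big)^{1/2}\le\sqrt{B}\,\|\{c_{ij}\}\|\,\|g\|,$$
hence $\big\|\sum_{i,j}c_{ij}f_{ij}\big\|\le\sqrt{B}\,\|\{c_{ij}\}\|$. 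Applying this estimate to tails of a general $\{c_{ij}\}$ in the domain shows that the net of finite partial sums of $\sum_{i,j}c_{ij}f_{ij}$ is Cauchy in $\mathcal{H}$, hence convergent (unconditionally, by completeness); so $T_{F}$ is well defined, it is visibly linear, and the estimate persists in the limit, giving $\|T_{F}\|\le\sqrt{B}$.

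For the ``only if'' direction, assume $T_{F}$ is well defined, linear and bounded. Evaluating the defining identity on the unit vectors $\delta_{(i,j)}$ one checks that $\langle T_{F}\{c_{ij}\},g\rangle=\sum_{i,j}c_{ij}\langle f_{ij},g\rangle=\langle\{c_{ij}\},\{\langle g,f_{ij}\rangle\}_{i,j}\rangle$, so the adjoint is the ``woven analysis operator'' $T_{F}^{*}g=\{\langle g,f_{ij}\rangle\}_{i\in\mathbb{I},j\in[m]}$, and in particular this sequence lies in the domain. Therefore $\sum_{i,j}|\langle g,f_{ij}\rangle|^{2}=\|T_{F}^{*}g\|^{2}\le\|T_{F}\|^{2}\|g\|^{2}$ for every $g$, i.e. the combined family is Bessel with bound $\|T_{F}\|^{2}$; restricting to an arbitrary partition $\{\sigma_{j}\}_{j\in[m]}$ then yields $\sum_{j\in[m]}\sum_{i\in\sigma_{j}}|\langle g,f_{ij}\rangle|^{2}\le\|T_{F}\|^{2}\|g\|^{2}$, so every weaving is Bessel with the common bound $\|T_{F}\|^{2}$, and $F$ is a Bessel woven.

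The substantive content is all in the short max--partition estimate, which upgrades a uniform weaving Bessel bound to an honest Bessel bound for the combined family; everything else is a transcription of the classical synthesis-operator argument. I expect the only points needing care to be bookkeeping: pinning down the precise meaning of the domain $\big(\sum_{j\in[m]}\bigoplus(\ell^{2}(\mathbb{I}))_{j}\big)_{\ell_{2}}$ and checking that the unit vectors $\delta_{(i,j)}$ belong to it (used in the adjoint computation), and justifying the unconditional convergence of $\sum_{i,j}c_{ij}f_{ij}$ through the Cauchy-net argument when $\mathbb{I}$ is infinite.
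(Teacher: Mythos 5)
Your proposal is correct and follows essentially the same route as the paper: the forward direction reduces boundedness of $T_{F}$ to the Bessel property of the (combined) family via the classical synthesis-operator estimate, and the converse computes the adjoint $T_{F}^{*}g=\{\langle g,f_{ij}\rangle\}_{i\in\mathbb{I},j\in[m]}$ and reads off the Bessel bound $\Vert T_{F}\Vert^{2}$. Your explicit max--partition lemma upgrading a uniform weaving Bessel bound $D$ to the bound $mD$ for the combined family is left implicit in the paper (which instead splits the sum into the $m$ sub-synthesis operators and uses the triangle inequality), but the substance is the same.
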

\begin{proof}
Let $ \left\lbrace f_{ij}\right\rbrace _{i\in\mathbb{I}, j\in\left[ m\right]} $ be a Bessel woven. Then for every fix $ j\in\left[ m\right]  $ and
$ \sigma_{j}\subset\mathbb{I}$, the family
$ \left\lbrace f_{ij}\right\rbrace _{i\in\sigma_{j}}$ is a Bessel sequence with Bessel bound $ D_{j} $. If we show the synthesis operator of $ \left\lbrace f_{ij}\right\rbrace _{i\in\sigma_{j}}$ with $ T_{\sigma_{j}} $, thus for every $ \left\lbrace c_{ij}\right\rbrace_{i\in \mathbb{I}, j\in [m]} \in \left( \sum_{j\in[m]}\bigoplus \left( \ell^{2}(\mathbb{I} )\right)_{j}\right) _{\ell_{2}} $, we have:
\begin{eqnarray*}
\Vert T_{F}\left\lbrace c_{ij}\right\rbrace\Vert^{2}&=&\left\Vert \sum_{i\in\mathbb{I},j\in[m]}c_{ij}f_{ij} \right\Vert^{2}\\
&=& \left\Vert \sum_{i\in\mathbb{I}}c_{i1}f_{i1}+\sum_{i\in\mathbb{I}}c_{i2}f_{i2}+ ... + \sum_{i\in\mathbb{I}}c_{im}f_{im} \right\Vert^{2}\\
&\leq & 2\left\Vert T_{\sigma_{1}}\left\lbrace c_{i1}\right\rbrace _{i\in \sigma_{1}}\right\Vert^{2}+2\left\Vert T_{\sigma_{2}}\left\lbrace c_{i2}\right\rbrace_{i\in \sigma_{2}} \right\Vert^{2}+ ... + 2\left\Vert T_{\sigma_{m}}\left\lbrace c_{im}\right\rbrace_{i\in \sigma_{m}} \right\Vert^{2}\\
&\leq & 2D_{1}\left\Vert \left\lbrace c_{i1}\right\rbrace_{i\in \sigma_{1}} \right\Vert^{2}+2D_{2}\left\Vert \left\lbrace c_{i2}\right\rbrace_{i\in \sigma_{2}} \right\Vert^{2}+ ... +2D_{m}\left\Vert \left\lbrace c_{im}\right\rbrace_{i\in \sigma_{m}} \right\Vert^{2}\\
&\leq & 2mD\left\Vert \left\lbrace c_{ik}\right\rbrace _{i\in \sigma_{k}}\right\Vert^{2}\\
&\leq & 2mD\left\Vert \left\lbrace c_{ij}\right\rbrace _{i\in \mathbb{I}, j\in [m]}\right\Vert^{2}\
\end{eqnarray*}
where
 $$ D=\max \left\lbrace D_{j};\quad 1\leq j\leq m\right\rbrace ~, ~
\left\Vert \left\lbrace c_{ik}\right\rbrace_{i\in \sigma_{k}} \right\Vert=\max \left\lbrace \left\Vert \left\lbrace c_{ij}\right\rbrace_{i\in \sigma_{j}} \right\Vert , 1\leq j\leq m\right\rbrace . $$
This calculation shows that $ T_{F} $ is bounded and well defined. It is clear that $ T_{F} $ is linear.

Conversely, suppose $ T_{F} $ is well defined, linear and bounded with bound $ D $. For every
$ f\in\mathcal{H} $, we have:
$$ \left\langle T_{F}\left\lbrace c_{ij}\right\rbrace_{i\in \mathbb{I}, j\in [m] } , f \right\rangle =\left\langle \sum_{i\in \mathbb{I}, j\in [m] }c_{ij}f_{ij}, f\right\rangle =\left\langle \left\lbrace c_{ij}\right\rbrace_{i\in \mathbb{I}, j\in [m] } , \left\lbrace \left\langle f, f_{ij}\right\rangle \right\rbrace_{i\in \mathbb{I}, j\in [m] } \right\rangle , $$
therefore
$$ T^{*}_{F}f=\left\lbrace \left\langle f, f_{ij}\right\rangle \right\rbrace_{i\in \mathbb{I}, j\in [m] }, $$
and
\begin{eqnarray*}
\sum_{i\in\mathbb{I}, j\in\left[ m\right] }\left\vert \left\langle f, f_{ij}\right\rangle \right\vert ^{2}&=&
\left\Vert T^{*}_{F}f\right\Vert ^{2}\\
&\leq & \left\Vert T^{*}_{F}\right\Vert^{2} \left\Vert f\right\Vert ^{2}\\
&=& \left\Vert T_{F}\right\Vert^{2} \left\Vert f\right\Vert ^{2}\\
&\leq & D\left\Vert f\right\Vert ^{2}.
\end{eqnarray*}
So the family $ \left\lbrace f_{ij}\right\rbrace _{i\in\mathbb{I}, j\in\left[m \right] } $ is a Bessel woven.
\end{proof}
If we only need to know that $ \left\lbrace f_{ij}\right\rbrace _{i\in\mathbb{I}, j\in\left[ m\right]} $ is a Bessel woven and the value of the Bessel bound is irrelevant, we need to check that the operator $ T_{F} $ is well defined or not.
\begin{corollary}
If $ \left\lbrace f_{ij}\right\rbrace _{i\in\mathbb{I}, j\in\left[ m\right]} $ is a family in $ \mathcal{H} $ and the series
$ \sum_{i\in\mathbb{I}, j\in\left[ m\right] }c_{ij}f_{ij} $ is convergent for all
$ \left\lbrace c_{ij}\right\rbrace _{i\in\mathbb{I}, j\in\left[ m\right]}\in\left( \sum_{j\in[m]}\bigoplus \left( \ell^{2}(\mathbb{I} )\right)_{j}\right) _{\ell_{2}} $. Then the sequence
$ \left\lbrace f_{ij}\right\rbrace _{i\in\mathbb{I}, j\in\left[ m\right]} $ is a Bessel woven.
\end{corollary}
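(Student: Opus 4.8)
The plan is to mimic the classical argument (cf. Christensen \cite{ole}) that an everywhere-defined formal synthesis operator is automatically bounded, and then to invoke Theorem \ref{T8}. The hypothesis says precisely that the map $T_F$ of Theorem \ref{T8} is well defined on all of $\left(\sum_{j\in[m]}\bigoplus(\ell^2(\mathbb{I}))_j\right)_{\ell_2}$, and linearity of $T_F$ is immediate from its definition. It therefore suffices to show that $T_F$ is bounded: once that is established, the converse direction of Theorem \ref{T8} gives at once that $\left\lbrace f_{ij}\right\rbrace_{i\in\mathbb{I},j\in[m]}$ is a Bessel woven (with Bessel bound controlled by $\Vert T_F\Vert^2$, though the precise value is irrelevant here).

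To prove boundedness I would use the uniform boundedness principle, exploiting that the domain $\left(\sum_{j\in[m]}\bigoplus(\ell^2(\mathbb{I}))_j\right)_{\ell_2}$ is a Hilbert space, as noted above. Fix an enumeration $\{(i_k,j_k)\}_k$ of the countable index set $\mathbb{I}\times[m]$ and, for each $N$, let $T_N$ be the truncated synthesis map $\left\lbrace c_{ij}\right\rbrace\mapsto\sum_{k=1}^{N}c_{i_kj_k}f_{i_kj_k}$. Each $T_N$ is a finite sum of rank-one operators, hence bounded. By hypothesis the series $\sum_{i\in\mathbb{I},j\in[m]}c_{ij}f_{ij}$ converges for every $\left\lbrace c_{ij}\right\rbrace$ in the domain, so $T_N\left\lbrace c_{ij}\right\rbrace\to T_F\left\lbrace c_{ij}\right\rbrace$ as $N\to\infty$; in particular $\sup_N\Vert T_N\left\lbrace c_{ij}\right\rbrace\Vert<\infty$ for each fixed $\left\lbrace c_{ij}\right\rbrace$. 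Banach--Steinhaus then yields $B:=\sup_N\Vert T_N\Vert<\infty$, and passing to the limit in $\Vert T_N\left\lbrace c_{ij}\right\rbrace\Vert\le\Vert T_N\Vert\,\Vert\left\lbrace c_{ij}\right\rbrace\Vert$ gives $\Vert T_F\left\lbrace c_{ij}\right\rbrace\Vert\le B\,\Vert\left\lbrace c_{ij}\right\rbrace\Vert$. Thus $T_F$ is bounded, and Theorem \ref{T8} completes the argument.

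The only genuinely delicate point is the passage from the everywhere-defined series to the pointwise boundedness of the family $\{T_N\}$: one must make sure that the stipulated convergence of $\sum_{i,j}c_{ij}f_{ij}$ is compatible with the chosen enumeration of $\mathbb{I}\times[m]$ (equivalently, that it may be read as an unconditionally convergent net over finite subsets), so that $T_N\left\lbrace c_{ij}\right\rbrace$ indeed converges to $T_F\left\lbrace c_{ij}\right\rbrace$. Once this is in place — and it is the natural reading of ``$\sum_{i\in\mathbb{I},j\in[m]}c_{ij}f_{ij}$ is convergent'' — the remainder is the standard Banach--Steinhaus boilerplate, and I expect no further obstacles.
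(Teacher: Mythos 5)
Your argument is correct, and it is essentially the proof the paper intends: the corollary is stated without proof as a consequence of Theorem \ref{T8}, the implicit point being that an everywhere-defined synthesis operator is automatically bounded, which is exactly what your truncation-plus-Banach--Steinhaus step supplies before invoking the converse direction of Theorem \ref{T8}. The enumeration caveat you flag is the right one and is handled exactly as you say (fix an enumeration, prove boundedness, and deduce unconditional convergence afterwards, as the paper's Corollary \ref{C1} then does).
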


The Bessel woven condition:
$$ \sum_{i\in\mathbb{I}, j\in\left[ m\right] }\left\vert \left\langle f, f_{ij}\right\rangle  \right\vert^{2}\leq D\Vert f\Vert^{2},\quad \forall f\in\mathcal{H}, $$
remains the same, regardless of how the elements $ \left\lbrace f_{ij}\right\rbrace _{i\in\mathbb{I}, j\in\left[ m\right]} $ are numbered.
\begin{corollary}
\label{C1}
Suppose that the family $ \left\lbrace f_{ij}\right\rbrace _{i\in\mathbb{I}, j\in\left[ m\right]} $ is a Bessel woven for $ \mathcal{H} $. Then, the series $ \sum_{i\in\mathbb{I}, j\in\left[ m\right] }c_{ij}f_{ij} $ converges unconditionally for all
$ \left\lbrace c_{ij}\right\rbrace _{i\in\mathbb{I}, j\in\left[ m\right]}\in \left( \sum_{j\in[m]}\bigoplus \left( \ell^{2}(\mathbb{I} )\right)_{j}\right) _{\ell_{2}} $.
\end{corollary}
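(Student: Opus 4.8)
The plan is to deduce unconditional convergence from the boundedness of the synthesis operator $T_F$ established in Theorem~\ref{T8}, via the Cauchy criterion for the net of finite partial sums. Recall that unconditional convergence of $\sum_{i\in\mathbb{I},j\in[m]}c_{ij}f_{ij}$ in the Banach space $\mathcal{H}$ is equivalent to convergence of the net $\bigl(\sum_{(i,j)\in J}c_{ij}f_{ij}\bigr)_{J}$ indexed by the finite subsets $J\subset\mathbb{I}\times[m]$ ordered by inclusion; since $\mathcal{H}$ is complete, it suffices to show this net is Cauchy, and the resulting estimate will visibly be independent of any enumeration of the index set.

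First I would fix $\left\lbrace c_{ij}\right\rbrace _{i\in\mathbb{I},j\in[m]}\in\bigl(\sum_{j\in[m]}\bigoplus(\ell^{2}(\mathbb{I}))_{j}\bigr)_{\ell_{2}}$. By the definition of this space, for each $j\in[m]$ the sequence $\left\lbrace c_{ij}\right\rbrace _{i\in\mathbb{I}}$ is square-summable, and since $[m]$ is finite we get $\sum_{i\in\mathbb{I},j\in[m]}\left\vert c_{ij}\right\vert^{2}<\infty$; hence, given $\varepsilon>0$, there is a finite set $J_{0}\subset\mathbb{I}\times[m]$ with $\sum_{(i,j)\notin J_{0}}\left\vert c_{ij}\right\vert^{2}<\varepsilon$. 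Next, for an arbitrary finite set $J\subset\mathbb{I}\times[m]$, let $c^{J}$ be the sequence that agrees with $\left\lbrace c_{ij}\right\rbrace$ on $J$ and vanishes off $J$; being finitely supported, $c^{J}$ lies in $\bigl(\sum_{j\in[m]}\bigoplus(\ell^{2}(\mathbb{I}))_{j}\bigr)_{\ell_{2}}$, and $T_{F}c^{J}=\sum_{(i,j)\in J}c_{ij}f_{ij}$. Since $T_{F}$ is bounded by Theorem~\ref{T8},
\begin{equation*}
\Bigl\Vert \sum_{(i,j)\in J}c_{ij}f_{ij}\Bigr\Vert =\Vert T_{F}c^{J}\Vert \leq \Vert T_{F}\Vert \Bigl(\sum_{(i,j)\in J}\left\vert c_{ij}\right\vert^{2}\Bigr)^{1/2}.
\end{equation*}

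Applying this inequality with $J$ replaced by $J_{2}\setminus J_{1}$, for any finite sets $J_{1}\subset J_{2}$ with $J_{0}\subset J_{1}$, and using $J_{2}\setminus J_{1}\subset(\mathbb{I}\times[m])\setminus J_{0}$, yields
\begin{equation*}
\Bigl\Vert \sum_{(i,j)\in J_{2}}c_{ij}f_{ij}-\sum_{(i,j)\in J_{1}}c_{ij}f_{ij}\Bigr\Vert =\Bigl\Vert \sum_{(i,j)\in J_{2}\setminus J_{1}}c_{ij}f_{ij}\Bigr\Vert \leq \Vert T_{F}\Vert \Bigl(\sum_{(i,j)\notin J_{0}}\left\vert c_{ij}\right\vert^{2}\Bigr)^{1/2}<\Vert T_{F}\Vert \sqrt{\varepsilon}.
\end{equation*}
Thus the net of finite partial sums is Cauchy and therefore converges in $\mathcal{H}$; as the bound does not refer to any ordering of $\mathbb{I}\times[m]$, the convergence is unconditional. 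I do not expect a genuine obstacle here: the only points needing care are checking that finitely supported sequences belong to the domain of $T_{F}$ (immediate from the description of the space) and invoking the standard equivalence, in a Banach space, between convergence of the net of finite partial sums and unconditional convergence of the series.
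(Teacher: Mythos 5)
Your proof is correct, and it is in the same spirit as the paper's: both arguments rest on the boundedness of the synthesis operator $T_{F}$ from Theorem \ref{T8} together with a Cauchy-type criterion for unconditional convergence. The difference is in which tail gets made small, and here your version is the right one. You observe that $\bigl\Vert \sum_{(i,j)\in J}c_{ij}f_{ij}\bigr\Vert \leq \Vert T_{F}\Vert \bigl(\sum_{(i,j)\in J}\vert c_{ij}\vert^{2}\bigr)^{1/2}$ for every finite $J$, and then choose a finite $J_{0}$ making the coefficient tail $\sum_{(i,j)\notin J_{0}}\vert c_{ij}\vert^{2}$ small; this gives a bound on $\bigl\Vert \sum_{(i,j)\in J}c_{ij}f_{ij}\bigr\Vert$ that is uniform over all finite $J$ disjoint from $J_{0}$ and independent of any enumeration, which is exactly the standard equivalent formulation of unconditional convergence (the paper cites it as Lemma 2.1.1 of Christensen). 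The paper instead fixes a vector $g$ and chooses $M$ so that the Bessel tail $\sum_{i>M,\,j\in[m]}\vert\langle g,f_{ij}\rangle\vert^{2}$ is small; since that $M$ depends on $g$, one cannot pass the supremum over $\Vert g\Vert=1$ inside and conclude that $\bigl\Vert\sum_{i>M}c_{ij}f_{ij}\bigr\Vert$ is small, so the paper's estimate as written does not close, and it also tacitly fixes one ordering of $\mathbb{I}$. Your net-of-finite-partial-sums formulation repairs this cleanly. The only points worth double-checking are ones you already flag: finitely supported sequences do lie in $\bigl(\sum_{j\in[m]}\bigoplus(\ell^{2}(\mathbb{I}))_{j}\bigr)_{\ell_{2}}$, and finiteness of $[m]$ is what lets you sum the squares over both indices.
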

\begin{proof}
Since $ \left\lbrace f_{ij}\right\rbrace _{i\in\mathbb{I}, j\in\left[ m\right]} $ is a Bessel woven, then by Theorem \ref{T8}, \cite{ole}, we have:
$$ \sum_{i\in\mathbb{I},j\in\left[ m\right] }\left\vert\left\langle f, f_{ij}\right\rangle\right\vert^{2}\leq D\Vert f\Vert^{2},\quad \forall f\in\mathcal{H}, $$
which is equivalent to
$$\left\Vert T_{F}\lbrace c_{ij}\rbrace\right\Vert^{2}\leq D\sum_{i\in\mathbb{I},j\in\left[ m\right]}\vert c_{ij}\vert^{2},\quad \forall\left\lbrace c_{ij}\right\rbrace _{i\in\mathbb{I}, j\in\left[ m\right]}\in \left( \sum_{j\in[m]}\bigoplus \left( \ell^{2}(\mathbb{I} )\right)_{j}\right) _{\ell_{2}} . $$
Therefore the series $ \sum_{i\in\mathbb{I},j\in\left[ m\right] }c_{ij}f_{ij} $ is convergent, so there exists $ f\in\mathcal{H} $ such that
$$ \left\Vert \sum_{i\in\mathbb{I},j\in\left[ m\right] }c_{ij}f_{ij} -f \right\Vert \leq\frac{\varepsilon}{2} .$$
Now, we show that the series
$ \sum_{i\in\mathbb{I},j\in\left[ m\right] }c_{ij}f_{ij} $ converges unconditionally. By Cauchy-Schwarz inequality, we have:
\begin{eqnarray*}
\left\Vert T_{F}\lbrace c_{ij}\rbrace_{i\in\mathbb{I}, j\in [m]}\right\Vert^{2}&=&
\sup_{\Vert g\Vert =1}\left\vert \left\langle \sum_{i\in\mathbb{I},j\in\left[ m\right]}c_{ij}f_{ij},g\right\rangle \right\vert\\
&\leq & \sup_{\Vert g\Vert =1}\sum_{i\in\mathbb{I},j\in\left[ m\right]}\left\vert c_{ij} \left\langle f_{ij},g\right\rangle \right\vert\\
&\leq &\left( \sum_{i\in\mathbb{I},j\in\left[ m\right]}\vert c_{ij}\vert^{2}\right) ^{\frac{1}{2}}
\sup_{\Vert g\Vert =1}\left( \sum_{i\in\mathbb{I},j\in\left[ m\right]}\left\vert\left\langle g, f_{ij}\right\rangle \right\vert^{2}\right) ^{\frac{1}{2}}.
\end{eqnarray*}
Since $ \left\lbrace f_{ij}\right\rbrace_{i\in\mathbb{I},j\in\left[ m\right] }^{\infty} $ is Bessel sequence, then
$ \sum_{i\in\mathbb{I},j\in\left[ m\right] }^{\infty}\left\vert\left\langle g, f_{ij}\right\rangle \right\vert^{2} $ is finite. Then for every $ \varepsilon >0 $, there exists
$ M\in\mathbb{I} $
$$ \sum_{i=M+1,j\in\left[ m\right] }^{\infty}\left\vert\left\langle g, f_{ij}\right\rangle \right\vert^{2}< \frac{\varepsilon}{2} .$$
Then
$$ \left\Vert\sum_{i=M+1,j\in\left[ m\right] }^{\infty}c_{ij}f_{ij}\right\Vert < \frac{\varepsilon}{2}. $$
So we have
$$  \left\Vert\sum_{i=1,j\in\left[ m\right] }^{M}c_{ij}f_{ij}-f\right\Vert -
 \left\Vert\sum_{i=M+1,j\in\left[ m\right] }^{\infty}c_{ij}f_{ij}\right\Vert \leq \left\Vert\sum_{i=1,j\in\left[ m\right] }^{\infty}c_{ij}f_{ij}-f \right\Vert\\
\leq \frac{\varepsilon}{2}. $$
%\begin{eqnarray*}
%\left\Vert\sum_{i=1,j\in\left[ m\right] }^{M}c_{ij}f_{ij}-f\right\Vert -
% \left\Vert\sum_{i=M+1,j\in\left[ m\right] }^{\infty}c_{ij}f_{ij}\right\Vert \\
%&\leq &\left\Vert\sum_{i=1,j\in\left[ m\right] }^{M}c_{ij}f_{ij}-f+
%\sum_{i=M+1,j\in\left[ m\right] }^{\infty}c_{ij}f_{ij}\right\Vert\\
%&\leq & \left\Vert\sum_{i=1,j\in\left[ m\right] }^{\infty}c_{ij}f_{ij}-f \right\Vert\\
%&\leq &\frac{\varepsilon}{2}.
%\end{eqnarray*}
Therefor
\begin{eqnarray*}
\left\Vert\sum_{k=1,j\in\left[ m\right] }^{M}c_{i_{k}j}f_{i_{k}j}-f\right\Vert < \varepsilon ,\quad \forall\varepsilon>0 .
\end{eqnarray*}
If we suppose $ F=\left\lbrace 1, 2, ... , M\right\rbrace $, we have
$$ \left\Vert\sum_{i\in F,j\in\left[ m\right] }c_{ij}f_{ij}-f\right\Vert < \varepsilon\quad ,\quad \varepsilon >0 . $$
So by Lemma 2.1.1, \cite{ole}, the series $ \sum_{i\in\mathbb{I}, j\in\left[ m\right] }c_{ij}f_{ij}$ converges unconditionally.
\end{proof}
Like frames and its extensions, we can characterize a woven frame in term of its woven frame operator.
\begin{definition}
Let $ F=\left\lbrace f_{ij}\right\rbrace _{i\in\mathbb{I},j\in\left[ m\right] } $ be a woven Bessel. Then for every partition
$ \left\lbrace \sigma_{j}\right\rbrace _{j\in\left[ m\right] } $, the family
$ F_{j}=\left\lbrace f_{ij}\right\rbrace _{i\in\sigma_{j}} $ for $ j\in\left[ m\right]  $ is a Bessel sequence. Therefore,
we define the analysis operator of $ F_{j} $ by
$$ U_{\sigma_{j}}: \mathcal{H}\longrightarrow \left( \ell^{2}(\mathbb{I} )\right)_{j},\quad U_{\sigma_{j}}(f)=\left\lbrace \left\langle f, f_{ij}\right\rangle \right\rbrace _{i\in\sigma_{j}},\quad \forall j\in[m], f\in\mathcal{H},$$
then $ Ran\left( U_{\sigma_{j}}\right) \subseteq \left( \ell^{2}(\mathbb{I} )\right)_{j}\subseteq \ell^{2}(\mathbb{I}) $.
The adjoint of $  U_{\sigma_{j}} $ is called the synthesis operator and in this paper, we denote
$ T_{\sigma_{j}}=U_{\sigma_{j}}^{*} $. By elementary calculation, for every $ j\in [m] $, we have:
$$ T_{\sigma_{j}}: \left( \ell^{2}(\mathbb{I})\right) _{j}\longrightarrow\mathcal{H},\quad T_{\sigma_{j}}\left\lbrace c_{ij}\right\rbrace_{i}= \sum_{i\in\sigma_{j}} c_{ij}f_{ij},~ \forall \left\lbrace c_{ij}\right\rbrace_{i} \in \left(\ell^{2}(\mathbb{I})\right)_{j}. $$
The frame operator of a weaving Bessel is obtained by combination of analysis and synthesis operators. For every
 $ f\in\mathcal{H}$ and $ j\in [m] $:
$$ S_{\sigma_{j}}f= T_{\sigma_{j}}U_{\sigma_{j}}f=T_{\sigma_{j}}\left\lbrace \left\langle f, f_{ij}\right\rangle \right\rbrace _{i\in\sigma_{j}}=\sum_{i\in\sigma_{j}}\left\langle f, f_{ij}\right\rangle f_{ij},$$
The operator $ S_{\sigma_{j}} $ is bounded, self-adjoint and invertible. We call the family
$ \left\lbrace S^{-1}_{\sigma_{j}}f_{ij}\right\rbrace _{i\in\sigma_{j}} $ standard dual weaving frame of $ F_{j} $.
Now, we define the analysis and synthesis operators for the Bessel woven $ F=\left\lbrace f_{ij}\right\rbrace _{i\in\mathbb{I}, j\in\left[ m\right]} $:
%Now, if for every $ \sigma_{j} $ and $ j\in\left[ m\right]  $, the above operators be well defined, then the analysis and synthesis operators %for the Bessel woven $ F=\left\lbrace f_{ij}\right\rbrace _{i\in\mathbb{I}, j\in\left[ m\right]} $ are defined:
$$ U_{F}:\mathcal{H}\longrightarrow \left( \sum_{j\in[m]}\bigoplus \left( \ell^{2}(\mathbb{I} )\right)_{j}\right) _{\ell_{2}},\quad U_{F}(f)=\left\lbrace \left\langle f, f_{ij}\right\rangle \right\rbrace _{i\in\mathbb{I},j\in[m]}, $$
and
$$ T_{F}:\left( \sum_{j\in[m]}\bigoplus \left( \ell^{2}(\mathbb{I} )\right)_{j}\right) _{\ell_{2}}\longrightarrow \mathcal{H},\quad T_{F}\left\lbrace c_{ij}\right\rbrace _{i\in\mathbb{I},j\in[m]}=\sum_{i\in\mathbb{I},j\in[m]}c_{ij}f_{ij} . $$
The operators $ U_{F} $ and $ T_{F} $ are well defined and bounded analysis and synthesis operators, respectively.
Also, by combination of $ U_{F} $ and $ T_{F} $, the woven frame operator $ S_{F} $, for all $ f\in\mathcal{H} $, is defined by
$$ S_{F}:\mathcal{H}\longrightarrow\mathcal{H},\quad S_{F}f=T_{F}U_{F}f=\sum_{i\in\mathbb{I},j\in[m]}\left\langle f, f_{ij}\right\rangle f_{ij}.$$
The operator $ S_{F} $ is bounded, linear and self-adjoint operator. Also every $ f\in\mathcal{H} $ can be represented as
$$ f=\sum_{i\in\mathbb{I},j\in[m]}\left\langle f, S_{F}^{-1}f_{ij}\right\rangle f_{ij}=\sum_{i\in\mathbb{I},j\in[m]}\left\langle f, f_{ij}\right\rangle S_{F}^{-1}f_{ij}. $$
The family $ \left\lbrace S^{-1}_{F}f_{ij}\right\rbrace_{i\in\mathbb{I},j\in[m]}  $ is said the standard dual woven of
$ F $.
\end{definition}
\subsection{Main results}
In this subsection, we explain some properties of operators of woven frames and Bessel wovens. Also, we bring some conditions that under those, the sum of Bessel wovens shall be woven frames.

In the next theorem, we demonstrate that the woven frames are equivalent to boundedness of woven frame operator.
\begin{theorem}
Let $ \left\lbrace f_{ij}\right\rbrace _{i\in\mathbb{I}, j\in\left[ m\right]} $ be finite family of Bessel sequences in $ \mathcal{H} $. Then the following conditions are equivalent:
\begin{enumerate}
\item[(i)] $ \left\lbrace f_{ij}\right\rbrace _{i\in\mathbb{I}, j\in\left[ m\right]} $ is woven frames with universal woven frame bounds $ C $ and $ D $.
\item[(ii)] for the operator $ S_{F}f=\sum_{i\in\mathbb{I},j\in\left[ m\right]}\left\langle f, f_{ij}\right\rangle f_{ij} $, we have $ CI_{\mathcal{H} }\leq S_{F}\leq DI_{\mathcal{H} } $.
\end{enumerate}
\end{theorem}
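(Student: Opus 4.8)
The plan is to reduce the assertion to the classical fact that a Bessel system is a frame with bounds $C,D$ precisely when its frame operator is sandwiched between $CI$ and $DI$, now applied to the doubly--indexed system $\{f_{ij}\}_{i\in\mathbb{I},\,j\in[m]}$ via the woven synthesis/analysis operators already introduced.

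First I would collect the structural properties of $S_F$. Since $[m]$ is finite and each layer $\{f_{ij}\}_{i\in\mathbb{I}}$ ($j$ fixed) is a Bessel sequence, and passing to subsets preserves this, the family $\{f_{ij}\}$ is a Bessel woven; Theorem \ref{T8} then gives that $T_F$ is well defined, linear and bounded, that $U_F=T_F^{*}$ acts by $U_Ff=\{\langle f,f_{ij}\rangle\}_{i\in\mathbb{I},\,j\in[m]}$ (the adjoint computation being the one already performed in the proof of Theorem \ref{T8}), and hence that $S_F=T_FU_F$ is a bounded, linear, positive, self-adjoint operator on $\mathcal{H}$. The identity I will use throughout is
$$\langle S_Ff,f\rangle=\langle T_FU_Ff,f\rangle=\|U_Ff\|^{2}=\sum_{i\in\mathbb{I},\,j\in[m]}|\langle f,f_{ij}\rangle|^{2},\qquad f\in\mathcal{H}.$$

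Next come the two implications. For (i) $\Rightarrow$ (ii): the woven frame inequalities, translated through the quadratic form above, give $C\|f\|^{2}\le\langle S_Ff,f\rangle\le D\|f\|^{2}$ for all $f\in\mathcal{H}$; since $S_F$ is self-adjoint, a scalar sandwich of $\langle S_Ff,f\rangle$ is equivalent to the operator sandwich $CI_{\mathcal{H}}\le S_F\le DI_{\mathcal{H}}$. For (ii) $\Rightarrow$ (i): testing $CI_{\mathcal{H}}\le S_F\le DI_{\mathcal{H}}$ against $f$ returns $C\|f\|^{2}\le\sum_{i\in\mathbb{I},\,j\in[m]}|\langle f,f_{ij}\rangle|^{2}\le D\|f\|^{2}$, and reorganizing this estimate over an arbitrary partition $\{\sigma_j\}_{j\in[m]}$ of $\mathbb{I}$ shows that each weaving $\{f_{ij}\}_{i\in\sigma_j,\,j\in[m]}$ obeys the frame inequality with the same $C$ and $D$, which is precisely the statement that $\{f_{ij}\}$ is a woven frame with universal bounds $C,D$. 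Finally, $S_F\ge CI_{\mathcal{H}}>0$ yields the invertibility of $S_F$ implicit in the surrounding discussion of the standard dual woven.

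The routine parts are the boundedness and positivity of $S_F$ and the self-adjoint dictionary between a scalar estimate on $\langle S_Ff,f\rangle$ and an operator inequality. The step I expect to demand the most care is the passage between the \emph{single} sum $\sum_{i\in\mathbb{I},\,j\in[m]}|\langle f,f_{ij}\rangle|^{2}$ attached to $S_F$ and the \emph{family} of weaving sums $\sum_{j\in[m]}\sum_{i\in\sigma_j}|\langle f,f_{ij}\rangle|^{2}$ indexed over all admissible partitions that enters the definition of a woven frame: one has to verify that the universal constants transfer correctly in both directions, and this is where the finiteness of $[m]$ and the Bessel property of each layer $\{f_{ij}\}_{i\in\mathbb{I}}$ are used. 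Once that identification is pinned down, the theorem follows immediately.
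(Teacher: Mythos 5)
Your route is the same as the paper's: identify $\langle S_F f, f\rangle$ with $\sum_{i\in\mathbb{I},\, j\in[m]}\vert\langle f, f_{ij}\rangle\vert^{2}$ and use self-adjointness of $S_F$ to convert the scalar sandwich into the operator sandwich. (The paper dresses up (ii) $\Rightarrow$ (i) slightly differently, obtaining the upper bound from $\Vert U_F f\Vert^{2}\le\Vert S_F\Vert\,\Vert f\Vert^{2}$ and the lower bound from $\langle S_F f,f\rangle=\Vert S_F^{1/2}f\Vert^{2}$, but this is the same content as testing the operator inequality against $f$.)

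However, the step you single out as the crux --- passing between the single sum $\sum_{i\in\mathbb{I},\,j\in[m]}\vert\langle f,f_{ij}\rangle\vert^{2}$ attached to $S_F$ and the weaving sums $\sum_{j\in[m]}\sum_{i\in\sigma_j}\vert\langle f,f_{ij}\rangle\vert^{2}$ over all partitions --- does not close as you assert, and this is a genuine gap rather than bookkeeping. A weaving is a proper subfamily of the full doubly-indexed family (each $i$ contributes $f_{ij}$ for exactly one $j$), so every weaving sum is a partial sum of the full sum. Consequently the upper bound in (ii) does transfer to every weaving, but the lower bound does not: $\sum_{i,j}\vert\langle f,f_{ij}\rangle\vert^{2}\ge C\Vert f\Vert^{2}$ says nothing about whether a given partial sum is still $\ge C\Vert f\Vert^{2}$ --- the whole point of the woven property is that this lower bound for \emph{every} partition is a nontrivial extra condition beyond the union being a frame. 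In the reverse direction, (i) gives each weaving sum $\le D\Vert f\Vert^{2}$, from which the full sum is only bounded by $mD\Vert f\Vert^{2}$ (for each $j$ take the partition with $\sigma_j=\mathbb{I}$ and the other blocks empty, then add over $j$), not by $D\Vert f\Vert^{2}$. So the claim that one can reorganize the estimate over an arbitrary partition ``with the same $C$ and $D$'' is false as stated, and no rearrangement repairs it; one must either change the constants or replace the single operator $S_F$ by the family of partition-dependent operators $S_{\sigma}$. To be fair, the paper's own proof silently makes the same identification and carries the same defect, so you have faithfully reconstructed its argument --- but since you explicitly promised to pin this identification down, it must be said that it cannot be pinned down in the form claimed.
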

\begin{proof}
$ \rm i\Rightarrow \rm ii $: By statement of $ (\rm i) $, we have for every $ f\in\mathbb{I} $
$$ \left\langle S_{F}f,f\right\rangle =\left\langle \sum_{i\in\mathbb{I},j\in\left[ m\right]}\left\langle f,f_{ij}\right\rangle f_{ij}, f\right\rangle =
\sum_{i\in\mathbb{I},j\in\left[ m\right]}\left\vert \left\langle f,f_{ij}\right\rangle\right\vert^{2} , $$
then we obtain
$$ C\left\Vert f\right\Vert^{2}\leq \left\langle S_{F}f,f\right\rangle \leq D\left\Vert f\right\Vert^{2},\quad \forall f\in \mathcal{H}, $$
or equivalently
$$ CI_{\mathcal{H}}\leq S_{F}\leq DI_{\mathcal{H}} . $$
$ \rm ii\Rightarrow\rm i $: Let
$ U_{F} $ be the analysis operator associated with
$ \left\lbrace f_{ij}\right\rbrace _{i\in\sigma_{j}, j\in\left[ m\right]} $. By the fact that
$ \left\Vert S_{F} \right\Vert = \left\Vert T_{F}U_{F} \right\Vert =\left\Vert U_{F} \right\Vert ^{2} $, for all $ f\in\mathcal{H} $, we have
\begin{eqnarray*}
\sum_{i\in\mathbb{I},j\in\left[ m\right]}\left\vert \left\langle f,f_{ij}\right\rangle\right\vert^{2}&=& \left\Vert U_{F}f\right\Vert ^{2}\\
&\leq & \left\Vert U_{F} \right\Vert ^{2} \left\Vert f\right\Vert ^{2}\\
&=& \left\Vert S_{F} \right\Vert \left\Vert f\right\Vert ^{2}\\
&\leq & D\left\Vert f\right\Vert ^{2}.
\end{eqnarray*}
So the upper woven bound is established. For lower weaving bound, for all $ f\in\mathcal{H} $, we have
\begin{eqnarray*}
\sum_{i\in\mathbb{I},j\in\left[ m\right]}\left\vert \left\langle f,f_{ij}\right\rangle\right\vert^{2}&=&
\left\langle S_{F}f,f\right\rangle\\
&=& \left\langle S_{F}^{\frac{1}{2}}f,S_{F}^{\frac{1}{2}}f\right\rangle\\
&=& \left\Vert S_{F}^{\frac{1}{2}}f \right\Vert^{2}\\
&\geq & C\left\Vert f \right\Vert^{2}.
\end{eqnarray*}
\end{proof}
The next result shows that we can constitute tight woven frames from every woven frames by weaving operators.
\begin{theorem}
Let $ F=\left\lbrace f_{ij}\right\rbrace _{i\in\mathbb{I}, j\in\left[ m\right]} $ be woven frame for $ \mathcal{H} $ with universal woven bounds $ C $ and $ D $ and the woven frame operator $ S_{F} $. If we define the positive square root of $ S^{-1}_{F} $ with $ S^{-\frac{1}{2}}_{F} $, then
$ \left\lbrace S^{-\frac{1}{2}}_{F}f_{ij}\right\rbrace _{i\in\mathbb{I}, j\in\left[ m\right]  } $ is tight woven frame and for all $ f\in\mathcal{H} $, we have:
$$ f=\sum_{i\in\mathbb{I}, j\in\left[ m\right]}\left\langle f, S^{-\frac{1}{2}}_{F} f_{ij}\right\rangle S^{-\frac{1}{2}} _{F} f_{ij}. $$
\end{theorem}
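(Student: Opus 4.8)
The plan is to follow the classical template by which one passes from an arbitrary frame to its canonical tight (Parseval) frame, now carried out at the level of the woven frame operator. First I would record the operator-theoretic input: by the discussion preceding this theorem, $S_F$ is a bounded, positive, self-adjoint, invertible operator on $\mathcal H$ satisfying $CI_{\mathcal H}\le S_F\le DI_{\mathcal H}$. Hence, by the continuous functional calculus, $S_F$ admits a unique positive self-adjoint square root $S_F^{1/2}$, which is again bounded and invertible; we set $S_F^{-1/2}:=\bigl(S_F^{1/2}\bigr)^{-1}=\bigl(S_F^{-1}\bigr)^{1/2}$, a bounded, positive, self-adjoint operator that commutes with $S_F$, with $S_F^{1/2}$, and with $S_F^{-1}$. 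These commutation relations are exactly what the rest of the argument rests on.

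Second, I would derive the reconstruction identity. Since $S_F^{-1/2}$ commutes with $S_F$, for every $f\in\mathcal H$ one has
$$ f \;=\; S_F^{-1/2}\,S_F\,S_F^{-1/2}f \;=\; S_F^{-1/2}\Bigl(\sum_{i\in\mathbb I,\,j\in[m]}\bigl\langle S_F^{-1/2}f,\,f_{ij}\bigr\rangle f_{ij}\Bigr), $$
applying the definition of $S_F$ to the vector $S_F^{-1/2}f$. Because $S_F^{-1/2}$ is bounded (hence continuous) and the defining series of $S_F$ converges unconditionally (Theorem~\ref{T8} together with Corollary~\ref{C1}), $S_F^{-1/2}$ may be moved inside the sum; using self-adjointness, $\bigl\langle S_F^{-1/2}f,f_{ij}\bigr\rangle=\bigl\langle f,S_F^{-1/2}f_{ij}\bigr\rangle$, which yields the asserted formula $f=\sum_{i\in\mathbb I,\,j\in[m]}\bigl\langle f,S_F^{-1/2}f_{ij}\bigr\rangle S_F^{-1/2}f_{ij}$.

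Third, I would check tightness. Writing $g_{ij}:=S_F^{-1/2}f_{ij}$ and again invoking self-adjointness,
$$ \sum_{i\in\mathbb I,\,j\in[m]}\bigl|\langle f,g_{ij}\rangle\bigr|^2 \;=\; \sum_{i\in\mathbb I,\,j\in[m]}\bigl|\langle S_F^{-1/2}f,\,f_{ij}\rangle\bigr|^2 \;=\; \bigl\langle S_F\bigl(S_F^{-1/2}f\bigr),\,S_F^{-1/2}f\bigr\rangle \;=\; \bigl\langle S_F^{1/2}f,\,S_F^{-1/2}f\bigr\rangle \;=\; \|f\|^2, $$
so the woven frame operator of $\{g_{ij}\}$ is $I_{\mathcal H}$; by the preceding theorem this is precisely the statement that $\{g_{ij}\}_{i\in\mathbb I,\,j\in[m]}$ is a tight woven frame with universal bound $1$. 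To match the original definition literally, I would also note that for any partition $\{\sigma_j\}_{j\in[m]}$ the weaving $\{g_{ij}\}_{i\in\sigma_j}$ is a genuine frame with bounds independent of the partition, since applying the bounded invertible operator $S_F^{-1/2}$ to the frame $\{f_{ij}\}_{i\in\sigma_j}$ (with uniform bounds $C,D$) produces a frame with bounds $\|S_F^{1/2}\|^{-2}C$ and $\|S_F^{-1/2}\|^{2}D$; in particular $\{g_{ij}\}$ is a Bessel woven, so $S_{\{g_{ij}\}}$ is well defined.

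The only genuinely non-routine point — the one I would dwell on — is the operator-square-root bookkeeping: verifying that $S_F^{-1/2}$ exists as a bounded self-adjoint operator, that it commutes with $S_F$, $S_F^{1/2}$ and $S_F^{-1}$, and that it may legitimately be interchanged with the unconditionally convergent series defining $S_F$. Once these facts are secured, both the reconstruction formula and the identity $\sum_{i,j}|\langle f,g_{ij}\rangle|^2=\|f\|^2$ follow by the short manipulations above.
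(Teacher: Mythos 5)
Your proposal is correct and follows essentially the same route as the paper's own proof: substitute $S_F^{-1/2}f$ into the defining series of $S_F$, pull $S_F^{-1/2}$ through to obtain the reconstruction formula, and then pair with $f$ to get $\sum_{i,j}\vert\langle f,S_F^{-1/2}f_{ij}\rangle\vert^{2}=\Vert f\Vert^{2}$. Your additional care in justifying the interchange of $S_F^{-1/2}$ with the unconditionally convergent series and in checking that each individual weaving remains a frame with partition-independent bounds only makes the argument more complete than the one in the paper.
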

\begin{proof}
Since the sequence $ \left\lbrace f_{ij}\right\rbrace _{i\in\mathbb{I}, j\in\left[ m\right]} $ is a woven frame with bounds $ C$ and $D $, we can write
$ CI_{\mathcal{H}}\leq S_{F}\leq DI_{\mathcal{H}} $, thus
$ D^{-1}I_{\mathcal{H}}\leq S^{-1}_{F}\leq C^{-1}I_{\mathcal{H}} $.
Now, by definition of woven frame operator of
$ F=\left\lbrace f_{ij}\right\rbrace _{i\in\sigma_{j}, j\in\left[ m\right]  } $, we have for every $ f\in\mathcal{H} $:
$$ S_{F}f=\sum_{i\in\mathbb{I}, j\in\left[ m\right]}\left\langle f, f_{ij}\right\rangle f_{ij} . $$
By substitution of $ S^{-\frac{1}{2}}_{F}f $, we have:
$$ S^{\frac{1}{2}}_{F}f =\sum_{i\in\mathbb{I}, j\in\left[ m\right]}\left\langle S^{-\frac{1}{2}}_{F}f, f_{ij}\right\rangle f_{ij} , $$
 therefore
\begin{eqnarray*}
f&=&S^{-\frac{1}{2}}_{F}\left( S^{\frac{1}{2}}_{F}f\right)\\&=&
S^{-\frac{1}{2}}_{F}\left( \sum_{i\in\mathbb{I}, j\in\left[ m\right]}\left\langle f, S^{-\frac{1}{2}}_{F}f_{ij}\right\rangle f_{ij}\right)\\
&=& \sum_{i\in\mathbb{I}, j\in\left[ m\right]}\left\langle f, S^{-\frac{1}{2}}_{F}f_{ij}\right\rangle S^{-\frac{1}{2} }_{F}f_{ij},
\end{eqnarray*}
Thus we obtain
\begin{eqnarray*}
\Vert f\Vert^{2}&=& \left\langle \sum_{i\in\mathbb{I}, j\in\left[ m\right]}\left\langle f, S^{-\frac{1}{2}}_{F} f_{ij}\right\rangle S^{-\frac{1}{2}} _{F} f_{ij}, f\right\rangle \\
&=&  \sum_{i\in\mathbb{I}, j\in\left[ m\right]}\left\vert \left\langle f, S^{-\frac{1}{2}}_{F} f_{ij}\right\rangle \right\vert^{2}.
\end{eqnarray*}
So $ \left\lbrace S^{-\frac{1}{2}}_{F} f_{ij}\right\rbrace _{i\in\mathbb{I}, j\in\left[ m\right]} $ is tight woven frames with universal bound 1.
\end{proof}
In the following theorem, we investigate the effect of a bounded and invertible operator on woven frames.
\begin{theorem}
\label{T3}
Let $ F=\left\lbrace f_{ij}\right\rbrace _{i\in\mathbb{I},j\in\left[ m\right]  } $ be a woven frame for $ \mathcal{H} $ with woven frame operator $ S_{F} $ and universal bounds
$ C $ and $ D $ and $ E:\mathcal{H}\longrightarrow\mathcal{H} $ be a bounded operator. Then the operator $ E $ is invertible if and only if $ \left\lbrace Ef_{ij}\right\rbrace _{i\in\mathbb{I},j\in\left[ m\right]} $ is woven frame for $ \mathcal{H} $. In this case, the universal bounds for $ F $ are
$ C\left\Vert E^{-1}\right\Vert^{-2}, D\left\Vert E\right\Vert^{2} $ and the woven frame operator $ ES_{F}E^{*} $.
\end{theorem}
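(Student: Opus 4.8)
The plan is to push everything through the scalar identity $\langle f,Ef_{ij}\rangle=\langle E^{*}f,f_{ij}\rangle$ together with the operator identity $S_{EF}=ES_{F}E^{*}$, where $S_{EF}$ denotes the (candidate) woven frame operator of $\{Ef_{ij}\}_{i\in\mathbb I,j\in[m]}$.

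First I would prove the forward implication, that invertibility of $E$ forces $\{Ef_{ij}\}$ to be a woven frame. Fix an arbitrary partition $\{\sigma_{j}\}_{j\in[m]}$ of $\mathbb I$. Since $F$ is a woven frame, $C\|g\|^{2}\le\sum_{i\in\sigma_{j},j\in[m]}|\langle g,f_{ij}\rangle|^{2}\le D\|g\|^{2}$ for every $g\in\mathcal H$, with $C,D$ independent of the partition. Applying this with $g=E^{*}f$ gives
\[
C\|E^{*}f\|^{2}\ \le\ \sum_{i\in\sigma_{j},j\in[m]}|\langle f,Ef_{ij}\rangle|^{2}\ \le\ D\|E^{*}f\|^{2}.
\]
On the right I bound $\|E^{*}f\|\le\|E\|\,\|f\|$; on the left I write $f=(E^{*})^{-1}E^{*}f$, so that $\|f\|\le\|(E^{*})^{-1}\|\,\|E^{*}f\|=\|E^{-1}\|\,\|E^{*}f\|$, hence $\|E^{*}f\|^{2}\ge\|E^{-1}\|^{-2}\|f\|^{2}$. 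As none of these constants depends on $\{\sigma_{j}\}$, the family $\{Ef_{ij}\}$ is a woven frame with universal bounds $C\|E^{-1}\|^{-2}$ and $D\|E\|^{2}$.

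Next I would identify the woven frame operator. Since $\{Ef_{ij}\}$ is in particular a Bessel woven, the series $S_{EF}f=\sum_{i\in\mathbb I,j\in[m]}\langle f,Ef_{ij}\rangle Ef_{ij}$ converges by Theorem~\ref{T8} and Corollary~\ref{C1}; pulling the bounded operator $E$ out of this convergent series and using $\langle f,Ef_{ij}\rangle=\langle E^{*}f,f_{ij}\rangle$ gives $S_{EF}f=E\sum_{i\in\mathbb I,j\in[m]}\langle E^{*}f,f_{ij}\rangle f_{ij}=ES_{F}E^{*}f$, i.e.\ $S_{EF}=ES_{F}E^{*}$.

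For the converse, suppose $\{Ef_{ij}\}$ is a woven frame with lower universal bound $C'>0$. Combining its lower inequality with the uniform upper bound $D$ of $F$ and the same identity yields $C'\|f\|^{2}\le\sum_{i,j}|\langle E^{*}f,f_{ij}\rangle|^{2}\le D\|E^{*}f\|^{2}$, so $E^{*}$ is bounded below; hence $E^{*}$ is injective with closed range, equivalently $E$ has dense and closed range, i.e.\ $E$ is surjective. This is the step I expect to be delicate: the left-hand sides only ever control $\|E^{*}f\|$, so the estimates by themselves deliver surjectivity of $E$ and nothing more, and the clean biconditional one actually gets is that $E$ is surjective if and only if $\{Ef_{ij}\}$ is a woven frame. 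To reach the two-sided invertibility asserted in the statement one must in addition know that $E$ is injective (equivalently that $E^{*}$ has dense range), so in the write-up I would either add this hypothesis or phrase the conclusion via surjectivity; once $E$ is invertible the bounds $C\|E^{-1}\|^{-2}$, $D\|E\|^{2}$ and the operator $ES_{F}E^{*}$ are precisely those obtained above.
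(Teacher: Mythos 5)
Your forward direction is essentially the paper's own argument: both rest on the identity $\langle f,Ef_{ij}\rangle=\langle E^{*}f,f_{ij}\rangle$, the estimate $\Vert E^{*}f\Vert\leq\Vert E\Vert\,\Vert f\Vert$ for the upper bound, the estimate $\Vert f\Vert=\Vert (E^{-1})^{*}E^{*}f\Vert\leq\Vert E^{-1}\Vert\,\Vert E^{*}f\Vert$ for the lower bound, and the same computation identifying the new frame operator as $ES_{F}E^{*}$. Where you diverge is the converse, and your suspicion there is justified. The paper's converse says that since $\{Ef_{ij}\}$ is a woven frame its frame operator $ES_{F}E^{*}$ is invertible, ``which implies that $E$ is invertible''; that last implication is false. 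For a concrete counterexample, let $E=S^{*}$ be the adjoint of the unilateral shift on $\ell^{2}(\mathbb{N})$ and take $f_{i1}=f_{i2}=e_{i}$: every weaving of $\{Ef_{ij}\}$ is the Parseval frame $\{0,e_{1},e_{2},\dots\}$, so $\{Ef_{ij}\}$ is a woven frame and $ES_{F}E^{*}=2S^{*}S=2I$ is invertible, yet $E$ is surjective but not injective. As you observe, the lower woven bound for $\{Ef_{ij}\}$ only forces $E^{*}$ to be bounded below, i.e.\ $E$ to be surjective, so the correct biconditional is ``$E$ is surjective if and only if $\{Ef_{ij}\}$ is a woven frame,'' and your proposed repair (add injectivity of $E$ as a hypothesis, or restate the equivalence with surjectivity) is exactly what is needed. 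In short, your proof matches the paper wherever the paper is correct, and it rightly declines to prove the one claim the paper's proof gets wrong.
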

\begin{proof}
Let $ E $ be invertible, since $ \left\lbrace f_{ij}\right\rbrace _{i\in\mathbb{I},j\in\left[ m\right]} $ is a woven frames for $ \mathcal{H} $,  with bounds $ C $ and $ D $, so the boundedness of $ T $ verifies the upper bound:
$$\sum_{i\in\mathbb{I}, j\in\left[ m\right] }\left\vert \left\langle f,Ef_{ij} \right\rangle \right\vert^{2}\leq
D\left\Vert E^{\ast}f\right\Vert^{2}\leq
D\left\Vert E^{\ast}\right\Vert^{2} \left\Vert f\right\Vert^{2}=
D\left\Vert E\right\Vert^{2} \left\Vert f\right\Vert^{2}.$$
For lower bound, we assume that $ g\in\mathcal{H} $. Since $ E $ is surjective, there exists $ f\in\mathcal{H} $ such that $ Ef=g $. Therefore
\begin{eqnarray*}
\left\Vert g\right\Vert^{2}&=&\left\Vert Ef\right\Vert^{2}\\
&=&\left\Vert \left( EE^{-1}\right) ^{*}Ef\right\Vert^{2}\\
&=&\left\Vert \left( E^{-1}\right) ^{*}E^{*}Ef\right\Vert^{2}\\
&\leq & \left\Vert E^{-1}\right\Vert^{2}\left\Vert E^{*}Ef\right\Vert^{2}\\
&\leq & \frac{\left\Vert E^{-1}\right\Vert^{2}}{C}\sum_{i\in\mathbb{I} ,j\in\left[ m\right]}
\left\vert \left\langle E^{*}Ef,f_{ij}\right\rangle \right\vert^{2}\\
&=& \frac{\left\Vert E^{-1}\right\Vert^{2}}{C}\sum_{i\in\mathbb{I} ,j\in\left[ m\right]}
\left\vert \left\langle Ef,Ef_{ij}\right\rangle \right\vert^{2},
\end{eqnarray*}
so we have:
$$  C\left\Vert E^{-1}\right\Vert^{-2}\left\Vert g\right\Vert^{2}\leq\sum_{i\in\mathbb{I} ,j\in\left[m\right]}\left\vert \left\langle g,Ef_{ij}\right\rangle \right\vert^{2}. $$
This calculations provide the desired results.

Conversely, If $ \left\lbrace Ef_{ij}\right\rbrace _{i\in\mathbb{I},j\in\left[ m\right]} $ is a woven frames for $ \mathcal{H} $ then its woven frame operator is invertible
$$ \sum_{i\in\mathbb{I},j\in\left[ m\right]}\left\langle f, Ef_{ij}\right\rangle Ef_{ij}=E\left( \sum_{i\in\mathbb{I},j\in\left[ m\right]}\left\langle f, Ef_{ij}\right\rangle f_{ij}\right)=ES_{F}E^{*}f,  $$
which implies that $ E $ is invertible.
\end{proof}
From the previous theorem, we can obtain the next result.
\begin{theorem}
Let $ F=\left\lbrace f_{ij}\right\rbrace _{i\in\mathbb{I},j\in [m] } $ be woven frame with universal woven bounds $ C $ and $ D $ for $ \mathcal{H} $ and woven frame operator $ S_{F} $. Then for every $ \sigma_{j}\subset\mathbb{I} $,
$  j\in\left[ m\right] $,we have:
\begin{enumerate}
\item[(i)] The sequence $ \left\lbrace S_{\sigma_{j}}f_{ij}\right\rbrace _{i\in\sigma_{j}} $ for every $ j\in\left[ m\right]  $ is a weaving frame i.e. the family
$ \left\lbrace S_{F}f_{ij}\right\rbrace _{i\in\mathbb{I}, j\in\left[ m\right] } $ is woven frames for $\mathcal{H}$, with universal lower and upper woven bounds $ C^3$ and $ D^{3}$, respectively.
\item[(ii)] The sequence $ \left\lbrace S_{\sigma_{j}}^{-1}f_{ij}\right\rbrace _{i\in\sigma_{j} } $ for every $ j\in\left[ m\right] $ is a weaving frame i.e. the family $ \left\lbrace S_{F}^{-1}f_{ij}\right\rbrace _{i\in\mathbb{I}, j\in\left[ m\right] } $ is a  woven frames for $\mathcal{H}$, with universal lower and upper woven bounds $ \frac{C }{D^{2}}$ and $ \frac{D}{C^{2}}$, respectively.
\end{enumerate}
\end{theorem}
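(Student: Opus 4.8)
The plan is to obtain both parts as immediate applications of Theorem~\ref{T3}, choosing the bounded invertible operator $E$ to be the woven frame operator $S_F$ in part~(i) and its inverse $S_F^{-1}$ in part~(ii), and then to convert the operator-norm bounds produced by that theorem into the stated numerical constants via the operator inequality $CI_{\mathcal H}\le S_F\le DI_{\mathcal H}$ (equivalently $D^{-1}I_{\mathcal H}\le S_F^{-1}\le C^{-1}I_{\mathcal H}$) already recorded in this section.

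For part~(i), since $F$ is a woven frame its operator $S_F$ is bounded, positive, self-adjoint and invertible, so Theorem~\ref{T3} applies with $E=S_F$ (note $E^{*}=S_F$). It yields that $\{S_F f_{ij}\}_{i\in\mathbb I,\,j\in[m]}$ is a woven frame with woven frame operator $S_F S_F S_F=S_F^{3}$ and universal bounds $C\|S_F^{-1}\|^{-2}$ and $D\|S_F\|^{2}$. From $CI_{\mathcal H}\le S_F\le DI_{\mathcal H}$ one has $\|S_F\|\le D$, giving the upper bound $D\|S_F\|^{2}\le D^{3}$, and $\|S_F^{-1}\|\le C^{-1}$, giving $\|S_F^{-1}\|^{-2}\ge C^{2}$ and hence the lower bound $C\|S_F^{-1}\|^{-2}\ge C^{3}$. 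The same computation carried out for a single weave---where the frame $F_j=\{f_{ij}\}_{i\in\sigma_j}$ has frame operator $S_{\sigma_j}$ satisfying $CI_{\mathcal H}\le S_{\sigma_j}\le DI_{\mathcal H}$---shows that each $\{S_{\sigma_j} f_{ij}\}_{i\in\sigma_j}$ is a frame with the same bounds $C^{3}$ and $D^{3}$, uniformly in $j$ and in the partition, which is the weaving reformulation.

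For part~(ii) I would repeat the argument with $E=S_F^{-1}$, again bounded, positive, self-adjoint and invertible, so that $E^{-1}=S_F$. Theorem~\ref{T3} now gives that $\{S_F^{-1} f_{ij}\}_{i\in\mathbb I,\,j\in[m]}$ is a woven frame with woven frame operator $S_F^{-1}S_F S_F^{-1}=S_F^{-1}$ and universal bounds $C\|S_F\|^{-2}$ and $D\|S_F^{-1}\|^{2}$. Using $\|S_F\|\le D$ one gets the lower bound $C\|S_F\|^{-2}\ge C/D^{2}$, and using $\|S_F^{-1}\|\le C^{-1}$ one gets the upper bound $D\|S_F^{-1}\|^{2}\le D/C^{2}$, as claimed.

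I do not expect a genuine obstacle here; the only things to watch are the direction of the inequalities when passing to inverses and when squaring, and checking that the composite operators $S_FS_FS_F$ and $S_F^{-1}S_FS_F^{-1}$ produced by Theorem~\ref{T3} simplify to $S_F^{3}$ and $S_F^{-1}$ respectively. Everything else is direct substitution into results already established above.
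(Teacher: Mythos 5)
Your proposal is correct and follows exactly the paper's own route: the paper's entire proof is the one-line instruction to apply Theorem~\ref{T3} with $E=S_{\sigma_j},S_F$ for part~(i) and $E=S_{\sigma_j}^{-1},S_F^{-1}$ for part~(ii), and you simply carry out that substitution and make explicit the conversion of $C\left\Vert E^{-1}\right\Vert^{-2}$ and $D\left\Vert E\right\Vert^{2}$ into $C^{3},D^{3}$ and $C/D^{2},D/C^{2}$ via $CI_{\mathcal H}\leq S_F\leq DI_{\mathcal H}$. The only difference is that you supply the bound computations the paper leaves implicit.
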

\begin{proof}
By Theorem $ \ref{T3} $, if we put
$ E=S_{\sigma_{j}}, S_{F} $ and $ E=S_{\sigma_{j}}^{-1}, S_{F}^{-1} $, then provide the results.
\end{proof}
In the next theorem, we give some conditions that under those, sum of wovens are woven also.
\begin{theorem}
\label{sumT}
Let $ F=\left\lbrace f_{ij}\right\rbrace _{i\in\mathbb{I},j\in[m]} $ and
$ G=\left\lbrace g_{ij}\right\rbrace _{i\in\mathbb{I},j\in[m]} $ be woven Bessel sequences in $ \mathcal{H} $, with analysis operators $ U_{F} $, $ U_{G} $ and woven frame operators $ S_{F} $, $ S_{_{G}}$, respectively. Also, let
$ E_{1},E_{2}:\mathcal{H}\longrightarrow\mathcal{H} $ be bounded operators. Then the family
$ \left\lbrace E_{1}f_{ij}+E_{2}g_{ij}\right\rbrace _{i\in\mathbb{I},j\in\left[ m\right] } $ is a woven frames for $ \mathcal{H} $ if and only if the operator $ U_{F}E_{1}^{*}+U_{G}E_{2}^{*} $ is an invertible operator on $ \mathcal{H} $.
\end{theorem}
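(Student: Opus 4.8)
The plan is to recognize $U_{F}E_{1}^{*}+U_{G}E_{2}^{*}$ as the analysis operator of the candidate system $H=\left\{E_{1}f_{ij}+E_{2}g_{ij}\right\}_{i\in\mathbb{I},j\in[m]}$ and then invoke the equivalence, already established above, between a system being a woven frame and its woven frame operator being invertible.

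First I would identify the operators attached to $H$. For $\left\{c_{ij}\right\}\in\left(\sum_{j\in[m]}\bigoplus(\ell^{2}(\mathbb{I}))_{j}\right)_{\ell_{2}}$, since $F$ and $G$ are woven Bessel the series $\sum c_{ij}f_{ij}$ and $\sum c_{ij}g_{ij}$ converge (unconditionally, by Corollary \ref{C1}), and therefore $\sum_{i\in\mathbb{I},j\in[m]}c_{ij}\left(E_{1}f_{ij}+E_{2}g_{ij}\right)=E_{1}T_{F}\left\{c_{ij}\right\}+E_{2}T_{G}\left\{c_{ij}\right\}$; that is, $T_{H}=E_{1}T_{F}+E_{2}T_{G}$. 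Since $T_{F},T_{G}$ are bounded by Theorem \ref{T8} and $E_{1},E_{2}$ are bounded, $T_{H}$ is bounded, so by Theorem \ref{T8} the system $H$ is itself a woven Bessel sequence, with analysis operator $U_{H}=T_{H}^{*}=U_{F}E_{1}^{*}+U_{G}E_{2}^{*}$ and woven frame operator $S_{H}=T_{H}U_{H}=U_{H}^{*}U_{H}$.

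Next I would apply the characterization proved above: a woven Bessel sequence is a woven frame precisely when $CI_{\mathcal{H}}\leq S_{H}\leq DI_{\mathcal{H}}$, i.e.\ precisely when $S_{H}$ is invertible. As $S_{H}=U_{H}^{*}U_{H}$ is positive and self-adjoint, $S_{H}$ is invertible if and only if $U_{H}$ is bounded below, which is exactly the sense in which $U_{F}E_{1}^{*}+U_{G}E_{2}^{*}$ is to be called invertible. Concretely: if $U_{F}E_{1}^{*}+U_{G}E_{2}^{*}$ is invertible, then $\left\langle S_{H}f,f\right\rangle=\Vert U_{H}f\Vert^{2}\geq c^{2}\Vert f\Vert^{2}$ for some $c>0$, which is the universal lower woven bound, while the upper bound is just the woven Bessel bound of $H$; conversely, if $H$ is a woven frame with universal bounds $C,D$, then $C\Vert f\Vert^{2}\leq\left\langle S_{H}f,f\right\rangle=\Vert U_{H}f\Vert^{2}$, so $U_{H}=U_{F}E_{1}^{*}+U_{G}E_{2}^{*}$ is bounded below.

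The one genuinely delicate point is the reading of ``invertible operator on $\mathcal{H}$'': the operator $U_{F}E_{1}^{*}+U_{G}E_{2}^{*}$ maps $\mathcal{H}$ into the larger direct-sum space rather than onto $\mathcal{H}$, so ``invertible'' here has to be understood as injective with closed range (equivalently, admitting a bounded left inverse, equivalently $U_{H}^{*}U_{H}=S_{H}$ being invertible), which is exactly the form the argument above delivers. Everything else --- convergence of the series, linearity, and passing between $T_{H}$ and $U_{H}$ by taking adjoints --- is routine given Theorem \ref{T8} and Corollary \ref{C1}.
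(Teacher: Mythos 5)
Your proof is correct and takes essentially the same route as the paper's: both identify the analysis operator of $\left\{E_{1}f_{ij}+E_{2}g_{ij}\right\}$ as $U_{F}E_{1}^{*}+U_{G}E_{2}^{*}$ and then invoke the equivalence between being a woven frame and that analysis operator being ``invertible'' (bounded below with closed range). The only differences are cosmetic but to your credit: the paper computes $U$ directly and cites Theorem 4.1 of Casazza for the equivalence, whereas you reach $U_{H}$ as the adjoint of $T_{H}=E_{1}T_{F}+E_{2}T_{G}$, justify the equivalence from the paper's own $CI_{\mathcal{H}}\leq S\leq DI_{\mathcal{H}}$ characterization, and explicitly verify the Bessel property and pin down the intended meaning of ``invertible,'' all of which the paper leaves implicit.
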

\begin{proof}
The family $ \left\lbrace E_{1}f_{ij}+E_{2}g_{ij}\right\rbrace _{i\in\mathbb{I},j\in\left[ m\right] } $ is a woven frames if and only if its analysis operator
$ U $ is invertible on $ \mathcal{H} $, by Theorem 4.1 in \cite{casazza}. Therefor
\begin{eqnarray*}
Uf&=&\left\lbrace \left\langle f, E_{1}f_{ij}+E_{2}g_{ij}\right\rangle \right\rbrace _{i\in\mathbb{I}, j\in\left[ m\right] }\\
&=&\left\lbrace \left\langle f, E_{1}f_{ij}\right\rangle +\left\langle f, E_{2}g_{ij}\right\rangle \right\rbrace _{i\in\mathbb{I}, j\in\left[ m\right]}\\
&=&\left\lbrace \left\langle E_{1}^{*}f, f_{ij}\right\rangle  \right\rbrace _{i\in\mathbb{I}, j\in\left[ m\right]}
+\left\lbrace \left\langle E_{2}^{*}f, g_{ij}\right\rangle \right\rbrace _{i\in\mathbb{I}, j\in\left[ m\right]}\\
&=& U_{F}E_{1}^{*}f+U_{G}E_{2}^{*}f,
\end{eqnarray*}
so the result is obtained.
\end{proof}
Now, by this theorem, we can immediately get the following corollary.
\begin{corollary}
By the hypotheses of Theorem \ref{sumT}, if $ U_{F}E_{1}^{*}+U_{G}E_{2}^{*} $ is an invertible operator on $ \mathcal{H} $, then the operator
$$ S=E_{1}S_{F}E_{1}^{*}+E_{2}S_{G}E_{2}^{*}+E_{1}U^{*}_{F}U_{G}E_{2}^{*}+ E_{2}U^{*}_{F}U_{F}E_{1}^{*} ,$$
is a positive operator.
\end{corollary}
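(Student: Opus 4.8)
The plan is to recognize that the operator $S$ displayed in the statement is nothing but the woven frame operator of the family $\left\lbrace E_{1}f_{ij}+E_{2}g_{ij}\right\rbrace _{i\in\mathbb{I},j\in\left[ m\right]}$, and then to invoke the elementary fact that any such operator, being of the form $U^{*}U$, is positive. First I would observe that, since $U_{F}E_{1}^{*}+U_{G}E_{2}^{*}$ is assumed invertible, Theorem \ref{sumT} already guarantees that $\left\lbrace E_{1}f_{ij}+E_{2}g_{ij}\right\rbrace$ is a woven frame for $\mathcal{H}$; moreover, the computation in the proof of that theorem identifies its analysis operator as $U:=U_{F}E_{1}^{*}+U_{G}E_{2}^{*}$, so that its synthesis operator is the adjoint $U^{*}=E_{1}U_{F}^{*}+E_{2}U_{G}^{*}$.

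Next I would compute the woven frame operator of this family as the composition $S=U^{*}U$. Expanding the product,
$$ S=\left( E_{1}U_{F}^{*}+E_{2}U_{G}^{*}\right)\left( U_{F}E_{1}^{*}+U_{G}E_{2}^{*}\right)=E_{1}U_{F}^{*}U_{F}E_{1}^{*}+E_{1}U_{F}^{*}U_{G}E_{2}^{*}+E_{2}U_{G}^{*}U_{F}E_{1}^{*}+E_{2}U_{G}^{*}U_{G}E_{2}^{*}. $$
Using the relations $U_{F}^{*}U_{F}=S_{F}$ and $U_{G}^{*}U_{G}=S_{G}$ between analysis, synthesis and frame operators recalled in Section 2 and carried over to the woven setting in the preceding Definition, this rewrites as
$$ S=E_{1}S_{F}E_{1}^{*}+E_{2}S_{G}E_{2}^{*}+E_{1}U_{F}^{*}U_{G}E_{2}^{*}+E_{2}U_{G}^{*}U_{F}E_{1}^{*}, $$
which is precisely the operator in the statement of the corollary.

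Finally, positivity follows at once: for every $f\in\mathcal{H}$ we have
$$ \left\langle Sf,f\right\rangle=\left\langle U^{*}Uf,f\right\rangle=\left\langle Uf,Uf\right\rangle=\left\Vert Uf\right\Vert^{2}\geq 0, $$
and self-adjointness is equally clear from $S=U^{*}U$; hence $S$ is a positive operator. The only place that needs a little care is the bookkeeping with adjoints when matching the two cross terms $E_{1}U_{F}^{*}U_{G}E_{2}^{*}$ and $E_{2}U_{G}^{*}U_{F}E_{1}^{*}$ against the form written in the corollary, but there is no genuine analytic obstacle here: the whole argument reduces to the identity $S=U^{*}U$ together with the invertibility already supplied by Theorem \ref{sumT}.
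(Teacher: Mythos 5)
Your proof is correct and takes essentially the same route as the paper: both identify $S$ with $U^{*}U$ where $U=U_{F}E_{1}^{*}+U_{G}E_{2}^{*}$ is the analysis operator of $\left\lbrace E_{1}f_{ij}+E_{2}g_{ij}\right\rbrace$, expand the product, and conclude positivity from $\left\langle Sf,f\right\rangle=\left\Vert Uf\right\Vert^{2}\geq 0$. Your expansion (like the paper's own computation) produces the cross term $E_{2}U_{G}^{*}U_{F}E_{1}^{*}$, which confirms that the term $E_{2}U_{F}^{*}U_{F}E_{1}^{*}$ in the corollary's displayed formula is a typographical error.
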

\begin{proof}
By Theorem \ref{sumT}, the family of sequences $ \left\lbrace E_{1}f_{ij}+E_{2}g_{ij}\right\rbrace _{i\in\mathbb{I},j\in\left[ m\right] } $ is a woven frames with the analysis operator $ U_{F}E_{1}^{*}+U_{G}E_{2}^{*} $, for which its woven frame operator is positive:
\begin{eqnarray*}
Sf&=&\left( U_{F}E_{1}^{*}f+U_{G}E_{2}^{*}f\right) ^{*}\left( U_{F}E_{1}^{*}f+U_{G}E_{2}^{*}f\right)\\
&=& \left( E_{1}U^{*}_{F}f+E_{2}U^{*}_{G}f\right)
\left( U_{F}E_{1}^{*}f+U_{G}E_{2}^{*}f\right)\\
&=& E_{1}S_{F}E_{1}^{*}f+E_{1}U_{F}^{*}U_{G}E_{2}^{*}f+ E_{2}U_{G}^{*}U_{F}E_{1}^{*}f+E_{2}S_{G}E_{2}^{*}f.
\end{eqnarray*}
\end{proof}

In the following theorem, we check frame bounds conditions on a dense subset in Hilbert space:

\begin{theorem}
Suppose $ \left\lbrace f_{ij}\right\rbrace _{i\in\mathbb{I}, j\in\left[ m\right]} $ is a woven frames for $ W $ with universal woven bounds
$ C $ and $ D $, where $ W $ is a dense subspace of $ \mathcal{H} $. Then
$ \left\lbrace f_{ij}\right\rbrace _{i\in\mathbb{I}, j\in\left[ m\right]} $ is a woven frames for $ \mathcal{H} $, with universal bound
$ C $ and $ D $.
\end{theorem}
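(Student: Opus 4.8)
The plan is to fix an arbitrary partition $\{\sigma_{j}\}_{j\in[m]}$ of $\mathbb{I}$ and show that the weaving $\{f_{ij}\}_{i\in\sigma_{j},\,j\in[m]}$ is a frame for all of $\mathcal{H}$ with bounds $C$ and $D$; since the partition is arbitrary, this yields exactly the woven frame property for $\mathcal{H}$. Throughout, use that for $f\in W$ one has $C\Vert f\Vert^{2}\leq\sum_{j\in[m]}\sum_{i\in\sigma_{j}}\vert\langle f,f_{ij}\rangle\vert^{2}\leq D\Vert f\Vert^{2}$.

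First I would establish the upper (Bessel) bound on $\mathcal{H}$. The subtlety is that for $f\in\mathcal{H}\setminus W$ we do not yet know the series $\sum\vert\langle f,f_{ij}\rangle\vert^{2}$ converges, so one cannot simply invoke continuity of an already-defined map. Instead, fix a finite subset $\Lambda\subset\{(i,j):i\in\sigma_{j},\,j\in[m]\}$ and pick $f_{n}\in W$ with $f_{n}\to f$ in $\mathcal{H}$. For each fixed $\Lambda$ the map $g\mapsto\sum_{(i,j)\in\Lambda}\vert\langle g,f_{ij}\rangle\vert^{2}$ is continuous, so passing to the limit in $\sum_{(i,j)\in\Lambda}\vert\langle f_{n},f_{ij}\rangle\vert^{2}\leq D\Vert f_{n}\Vert^{2}$ gives $\sum_{(i,j)\in\Lambda}\vert\langle f,f_{ij}\rangle\vert^{2}\leq D\Vert f\Vert^{2}$. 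Taking the supremum over all finite $\Lambda$ shows $\sum_{i\in\sigma_{j},\,j\in[m]}\vert\langle f,f_{ij}\rangle\vert^{2}\leq D\Vert f\Vert^{2}$ for every $f\in\mathcal{H}$. In particular the analysis operator $U_{\sigma}:\mathcal{H}\to\big(\sum_{j\in[m]}\bigoplus(\ell^{2}(\mathbb{I}))_{j}\big)_{\ell_{2}}$, $U_{\sigma}f=\{\langle f,f_{ij}\rangle\}_{i\in\sigma_{j},\,j\in[m]}$, is now well defined and bounded on all of $\mathcal{H}$, with $\Vert U_{\sigma}\Vert^{2}\leq D$.

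For the lower bound, take $f\in\mathcal{H}$ and $f_{n}\in W$ with $f_{n}\to f$. Since $U_{\sigma}$ is bounded on $\mathcal{H}$, we have $U_{\sigma}f_{n}\to U_{\sigma}f$, hence $\Vert U_{\sigma}f_{n}\Vert^{2}\to\Vert U_{\sigma}f\Vert^{2}$, i.e.\ $\sum_{i\in\sigma_{j},\,j\in[m]}\vert\langle f_{n},f_{ij}\rangle\vert^{2}\to\sum_{i\in\sigma_{j},\,j\in[m]}\vert\langle f,f_{ij}\rangle\vert^{2}$; also $\Vert f_{n}\Vert^{2}\to\Vert f\Vert^{2}$. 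Passing to the limit in $C\Vert f_{n}\Vert^{2}\leq\sum_{i\in\sigma_{j},\,j\in[m]}\vert\langle f_{n},f_{ij}\rangle\vert^{2}$ yields $C\Vert f\Vert^{2}\leq\sum_{i\in\sigma_{j},\,j\in[m]}\vert\langle f,f_{ij}\rangle\vert^{2}$. Thus the weaving associated with $\{\sigma_{j}\}$ is a frame for $\mathcal{H}$ with bounds $C$ and $D$, and since $\{\sigma_{j}\}$ was an arbitrary partition, $\{f_{ij}\}_{i\in\mathbb{I},\,j\in[m]}$ is a woven frame for $\mathcal{H}$ with universal bounds $C$ and $D$.

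The only genuine obstacle is the first step: one must prove boundedness of the weaving analysis operator on $\mathcal{H}$ \emph{before} one can treat the lower bound by a soft continuity argument, and this forces the finite-truncation-plus-supremum device rather than a direct limit in the full series. Everything after that is a routine density/continuity argument.
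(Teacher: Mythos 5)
Your proof is correct, and for the lower bound it follows essentially the same route as the paper: once the analysis operator is known to be bounded on all of $\mathcal{H}$, the inequality $C\Vert f\Vert^{2}\leq\Vert U_{\sigma}f\Vert^{2}$ extends from the dense subspace $W$ by continuity. Where you genuinely diverge is the upper bound. The paper argues by contradiction: it assumes some $g\in\mathcal{H}$ violates the Bessel bound, invokes the corollary on unconditional convergence to extract a finite index set on which the partial sum already exceeds $D\Vert g\Vert^{2}$, and then perturbs $g$ to a nearby $h\in W$ to contradict the bound on $W$. That argument is shakier than yours: exceeding the bound $D\Vert g\Vert^{2}$ does not by itself imply failure of unconditional convergence, and the perturbation step is left vague. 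Your finite-truncation device --- prove $\sum_{(i,j)\in\Lambda}\vert\langle f,f_{ij}\rangle\vert^{2}\leq D\Vert f\Vert^{2}$ for each finite $\Lambda$ by continuity of a finite sum, then take the supremum over $\Lambda$ --- is the standard direct argument, sidesteps any question of whether the full series converges a priori, and is airtight. You are also more careful than the paper on a structural point: you fix an arbitrary partition and verify the frame inequalities for the corresponding weaving, which is what the woven property actually requires, whereas the paper works throughout with the single sum over all of $\mathbb{I}\times[m]$. Both approaches buy the same theorem, but yours is the one I would keep.
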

\begin{proof}
Since $ \left\lbrace f_{ij}\right\rbrace _{i\in\mathbb{I}, j\in\left[ m\right]} $ is woven frames for $ W $, then
\begin{eqnarray}
\label{T1}
C\Vert f\Vert^{2}\leq \sum_{i\in\mathbb{I}, j\in\left[ m\right] }\left\vert \left\langle f, f_{ij}\right\rangle \right\vert ^{2}\leq D\Vert f\Vert^{2},\quad \forall f\in W.
\end{eqnarray}
For upper bound, by contradiction, suppose that there exists $ g\in\mathcal{H} $, such that:
$$ \sum_{i\in\mathbb{I}, j\in\left[ m\right] }\left\vert \left\langle g, f_{ij}\right\rangle \right\vert ^{2}\geq D\Vert f\Vert^{2}. $$
By Corollary $ \ref{C1} $, the series
$ \sum_{i\in\mathbb{I}, j\in\left[ m\right] }\left\vert \left\langle g, f_{ij}\right\rangle \right\vert ^{2} $ is not convergent unconditionally. Then there exists a finite set $ M\subset\mathbb{I} $ where
$$ \sum_{i\in M, j\in\left[ m\right] }\left\vert \left\langle g, f_{ij}\right\rangle \right\vert ^{2}\geq D\Vert f\Vert^{2},\quad g\in\mathcal{H}.$$
Since $ W $ is a dens subset in $ \mathcal{H} $, so there exist $ h\in W $ such that
$$ \sum_{i\in M, j\in\left[ m\right] }\left\vert \left\langle h, f_{ij}\right\rangle \right\vert ^{2}\geq D\Vert f\Vert^{2},\quad h\in\mathcal{H}, $$
then the sequence $ \left\lbrace f_{ij}\right\rbrace _{i\in \sigma, j\in [m]} $ is not Bessel in $ W $ and this is a contradiction. So the family
$ \left\lbrace f_{ij}\right\rbrace _{i\in\mathbb{I}, j\in\left[ m\right]} $ is a Bessel woven in $ \mathcal{H} $. Now for lower frame bound, by $ (\ref{T1} ) $, we have for every $ f\in W $
\begin{eqnarray}
\label{T2}
C\Vert f\Vert^{2}\leq \sum_{i\in\mathbb{I}, j\in\left[ m\right] }\left\vert \left\langle f, f_{ij}\right\rangle \right\vert ^{2}
= \left\Vert U_{F}f\right\Vert^{2}.
\end{eqnarray}
Then $ U_{F} $ is a bounded operator in $ W $ for which is dens in $ \mathcal{H} $. Therefor the statement of $ (\ref{T2} ) $ holds for all $ f\in\mathcal{H} $.
\end{proof}
In the next theorem, from every woven frame in Hilbert space $ \mathcal{H} $, we constitute woven frame for smaller spaces, by using orthogonal projection.
\begin{theorem}
\label{T6}
Suppose $ \left\lbrace f_{ij}\right\rbrace _{i\in\mathbb{I}, j\in\left[ m\right]} $ is woven frame for Hilbert space $ \mathcal{H} $, with universal bounds $ C $ and $ D $ and $ P $ denotes the orthogonal projection onto a closed subspace $ W $ of $ \mathcal{H} $. Then $ \left\lbrace Pf_{ij}\right\rbrace _{i\in\mathbb{I}, j\in\left[ m\right]} $ is a woven frames for $ W $ with the same universal bounds.
\end{theorem}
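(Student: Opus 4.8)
The plan is to exploit the two defining properties of the orthogonal projection $P$ onto $W$, namely $P^{*}=P$ and $Pf=f$ for every $f\in W$, in order to reduce the weaving sums of $\left\lbrace Pf_{ij}\right\rbrace$ over $W$ to the weaving sums of the original family $\left\lbrace f_{ij}\right\rbrace$ over $W\subseteq\mathcal{H}$. First I would note that each $Pf_{ij}$ lies in $W=\mathrm{Ran}(P)$, so $\left\lbrace Pf_{ij}\right\rbrace_{i\in\mathbb{I},j\in[m]}$ is genuinely a candidate family in $W$, and that the inner product on $W$ is just the restriction of the one on $\mathcal{H}$, so there is no ambiguity in the computations that follow.

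Next I would fix an arbitrary partition $\left\lbrace\sigma_{j}\right\rbrace_{j\in[m]}$ of $\mathbb{I}$ and an arbitrary $f\in W$. Using self-adjointness of $P$ together with $Pf=f$, for all $i$ and $j$ we get $\left\langle f,Pf_{ij}\right\rangle=\left\langle Pf,f_{ij}\right\rangle=\left\langle f,f_{ij}\right\rangle$. Summing over the weaving index set yields
$$\sum_{j\in[m]}\sum_{i\in\sigma_{j}}\bigl|\left\langle f,Pf_{ij}\right\rangle\bigr|^{2}=\sum_{j\in[m]}\sum_{i\in\sigma_{j}}\bigl|\left\langle f,f_{ij}\right\rangle\bigr|^{2}.$$
Since $\left\lbrace f_{ij}\right\rbrace$ is a woven frame for $\mathcal{H}$ with universal bounds $C$ and $D$, and the weaving family $\left\lbrace f_{ij}\right\rbrace_{i\in\sigma_{j}}$ associated with this partition is a frame for $\mathcal{H}$ with bounds $C$ and $D$, the right-hand side is squeezed between $C\Vert f\Vert^{2}$ and $D\Vert f\Vert^{2}$. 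Hence the left-hand side satisfies the same inequalities for every $f\in W$, so $\left\lbrace Pf_{ij}\right\rbrace_{i\in\sigma_{j}}$ is a frame for $W$ with bounds $C$ and $D$.

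Finally, because the partition $\left\lbrace\sigma_{j}\right\rbrace_{j\in[m]}$ was arbitrary, the same pair $C,D$ works for every partition, which is exactly the statement that $\left\lbrace Pf_{ij}\right\rbrace_{i\in\mathbb{I},j\in[m]}$ is a woven frame for $W$ with the same universal bounds. There is no real obstacle here: the content is the weaving analogue of the classical fact that an orthogonal projection sends a frame for $\mathcal{H}$ to a frame for its range, and the point worth stressing is only that the reduction above is completely independent of the chosen partition, which is what preserves the \emph{universality} of the bounds rather than merely the frame bounds of each individual weaving.
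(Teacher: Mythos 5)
Your proof is correct and follows essentially the same route as the paper's: both use $P^{*}=P$ and $Pf=f$ for $f\in W$ to identify $\sum\bigl|\left\langle f,Pf_{ij}\right\rangle\bigr|^{2}$ with $\sum\bigl|\left\langle f,f_{ij}\right\rangle\bigr|^{2}$ and then invoke the universal woven bounds. Your version is slightly more explicit in fixing an arbitrary partition and noting that the reduction is partition-independent, which is the point that preserves universality.
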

\begin{proof}
Because $ \left\lbrace f_{ij}\right\rbrace _{i\in\mathbb{I}, j\in\left[ m\right]} $ is a woven frames for $ \mathcal{H} $, with universal bounds
$ C $ and $ D $, so we have:
\begin{eqnarray*}
C\Vert f\Vert^{2}\leq \sum_{i\in\mathbb{I}, j\in\left[ m\right] }\left\vert \left\langle f, f_{ij}\right\rangle \right\vert ^{2}\leq D\Vert f\Vert^{2},\quad \forall f\in\mathcal{H}.
\end{eqnarray*}
Since we have $ Pf=f $, for all $ f\in W $ and $ Pf=0 $, for all $ f\in W^{\perp} $, then for every $ f\in W $ we can write
\begin{eqnarray*}
\sum_{i\in\mathbb{I}, j\in\left[ m\right] }\left\vert \left\langle f, f_{ij}\right\rangle \right\vert ^{2}=
\sum_{i\in\mathbb{I}, j\in\left[ m\right] }\left\vert \left\langle Pf, f_{ij}\right\rangle \right\vert ^{2}=
\sum_{i\in\mathbb{I}, j\in\left[ m\right] }\left\vert \left\langle f, Pf_{ij}\right\rangle \right\vert ^{2}.
\end{eqnarray*}
Therefore $ \left\lbrace Pf_{ij}\right\rbrace _{i\in\mathbb{I}, j\in\left[ m\right]} $ is woven frames for $ W $ with bounds $ C $ and
$ D $.
\end{proof}
The following corollary follows from Theorem \ref{T6}.
\begin{corollary}
\label{C7}
Let $ \left\lbrace f_{ij}\right\rbrace _{i\in\mathbb{I}, j\in\left[ m\right]} $ be a woven frame for Hilbert space
$ \mathcal{H} $ and $ V, W $
are closed subspaces of $ \mathcal{H} $ such that $ V\cap W\neq \phi $ and $ P $ denotes the orthogonal projection of $ \mathcal{H} $ onto $ V\cap W $. Then $ \left\lbrace Pf_{ij}\right\rbrace _{i\in\mathbb{I}, j\in\left[ m\right]} $ is a woven frame for $ V\cap W $.
\end{corollary}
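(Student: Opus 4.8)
The plan is to reduce the statement directly to Theorem~\ref{T6}, since that theorem already delivers the woven-frame property for the image under an orthogonal projection onto an arbitrary closed subspace. The only preliminary point that must be settled is that $V\cap W$ is itself a closed subspace of $\mathcal{H}$, so that the orthogonal projection $P$ onto it is well defined and Theorem~\ref{T6} is applicable with this particular subspace.

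First I would record the elementary fact that $V\cap W$ is a linear subspace, being the intersection of the linear subspaces $V$ and $W$, and that it is closed, being the intersection of two closed subsets of $\mathcal{H}$. In particular $V\cap W$ is nonempty (it always contains $0$), so the hypothesis $V\cap W\neq\phi$ is only there to guarantee that the projection is not the zero map; it plays no further role. Hence $V\cap W$ is a closed subspace of $\mathcal{H}$, and the orthogonal projection $P\colon\mathcal{H}\longrightarrow V\cap W$ exists.

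Next I would invoke Theorem~\ref{T6} with the closed subspace denoted $W$ in the statement of that theorem taken to be $V\cap W$. Since $\left\lbrace f_{ij}\right\rbrace _{i\in\mathbb{I}, j\in\left[ m\right]}$ is a woven frame for $\mathcal{H}$ with universal bounds $C$ and $D$, Theorem~\ref{T6} immediately yields that $\left\lbrace Pf_{ij}\right\rbrace _{i\in\mathbb{I}, j\in\left[ m\right]}$ is a woven frame for $V\cap W$, and in fact with the same universal bounds $C$ and $D$. This completes the proof.

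I do not expect any genuine obstacle: all of the analytic content is already contained in Theorem~\ref{T6}, and the corollary merely packages the topological observation that an intersection of closed subspaces is again a closed subspace. That observation is the unique point at which the hypotheses on $V$ and $W$ are used.
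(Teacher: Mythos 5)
Your proposal is correct and follows exactly the route the paper intends: the paper offers no separate argument for Corollary~\ref{C7} beyond the remark that it ``follows from Theorem~\ref{T6},'' and your reduction --- observing that $V\cap W$ is a closed subspace (hence always contains $0$, so the stated hypothesis $V\cap W\neq\phi$ is automatic) and then applying Theorem~\ref{T6} with that subspace --- is precisely that reduction, spelled out.
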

\begin{flushleft}
\newpage
{\LARGE Application:}
\end{flushleft}
In this section, we provide an example of woven frames in the Euclidean space $ \mathbb{R}^{3} $, then from this woven frame, we constitute a woven frame for smaller subspace of $ \mathbb{R}^{3} $, by Theorem $ \ref{T6} $ and Corollary $ \ref{C7}$.
\begin{example}
Let $ \left\lbrace e_{i}\right\rbrace _{i=1}^{3} $ be the standard orthonormal basis for $ \mathbb{R}^{3} $. Suppose there exist constants $ \alpha >0 $ and $ \beta=\frac{1}{\sqrt{1+\alpha^{2}}} $. Also $ G, Q $ are the sets
$$ G=\left\lbrace g_{i}\right\rbrace _{i=1}^{6}=
\left\lbrace \beta e_{1},\alpha\beta e_{1},\beta e_{2},\alpha\beta e_{2},\beta e_{3},\alpha\beta e_{3},\right\rbrace  $$
and:
$$ Q=\left\lbrace q_{i}\right\rbrace _{i=1}^{6}=\left\lbrace \alpha\beta e_{1},\beta e_{1},\alpha\beta e_{2},\beta e_{2},\alpha\beta e_{3},\beta e_{3},\right\rbrace . $$
 $ G $ is a Parseval frame:
 \begin{eqnarray*}
 \sum_{i=1}^{6}\vert\left\langle f, g_{i}\right\rangle \vert^{2}&=&\vert\left\langle f, \beta e_{1}\right\rangle \vert^{2}+\vert\left\langle f, \alpha\beta e_{1}\right\rangle \vert^{2}\\
&+&\vert\left\langle f, \beta e_{2}\right\rangle \vert^{2}+\vert\left\langle f, \alpha\beta e_{2}\right\rangle \vert^{2}+\vert\left\langle f, \beta e_{3}\right\rangle \vert^{2}+\vert\left\langle f, \alpha\beta e_{3}\right\rangle \vert^{2}\\&=& \beta^{2}\left( 1+\alpha^{2}\right) \sum_{i=1}^{3}\vert\left\langle f, e_{i}\right\rangle \vert^{2}\\
 &=& \Vert f\Vert^{2}.
\end{eqnarray*}
Similarly $ Q $ is a Parseval frame, for which $ G, Q $ are woven frames, since every
$ \sigma\subset \left\lbrace 1,2,3,4,5,6\right\rbrace  $
gives a spanning set. For example, we calculate for two weaving:
Let $ \sigma_{1} =\left\lbrace 2,4,6\right\rbrace  $. Then for every $ f\in\mathbb{R}^{3} $, we have:
 \begin{eqnarray*}
 \sum_{i\in\sigma_{1}}\vert\left\langle f, g_{i}\right\rangle \vert^{2}+\sum_{i\in\sigma_{1}^{c}}\vert\left\langle f,q_{i}\right\rangle \vert^{2}&=& \alpha^{2}\beta^{2}\sum_{i=1}^{3}\vert\left\langle f, e_{i}\right\rangle \vert^{2}\\&=&\alpha^{2}\beta^{2}\Vert f\Vert^{2}\\&=& \frac{\alpha^{2}}{1+\alpha^{2}}\Vert f\Vert^{2},
 \end{eqnarray*}
 So this weaving is tight frame with bounds
 $ C_{\sigma_{1}}=D_{\sigma_{1}}=\frac{\alpha^{2}}{1+\alpha^{2}} $.
Also for $ \sigma_{2}=\left\lbrace 1,2 \right\rbrace  $, we have:
\begin{eqnarray*}
 \sum_{i\in\sigma_{2}}\vert\left\langle f, g_{i}\right\rangle \vert^{2}+\sum_{i\in\sigma_{2}^{c}}\vert\left\langle f, q_{i}\right\rangle \vert^{2} &=&\vert\left\langle f, \beta e_{1}\right\rangle \vert^{2}+\vert\left\langle f, \alpha\beta e_{1}\right\rangle \vert^{2}+\vert\left\langle f, \alpha\beta e_{2}\right\rangle \vert^{2}\\
 &+&\vert\left\langle f,\beta e_{2}\right\rangle \vert^{2}+\vert\left\langle f, \alpha\beta e_{3}\right\rangle \vert^{2}+
 \vert\left\langle f, \beta e_{3}\right\rangle \vert^{2}\\
 &=& \left( \alpha^{2}\beta^{2}+\beta^{2}\right)\sum_{i=1}^{3}\vert\left\langle f, e_{i}\right\rangle \vert^{2}\\
&=&\Vert f\Vert^{2},
\end{eqnarray*}
thus this weaving is Parseval frame and $ C_{\sigma_{2}}=D_{\sigma_{2}}=1 $ . Now, If we have:
 $$ C=\min\left\lbrace C_{\sigma_{j}}\quad s.t\quad j\in\left[64\right] \right\rbrace $$
 and
 $$D=\max\left\lbrace D_{\sigma_{j}}\quad s.t\quad j\in\left[64\right] \right\rbrace , $$
therefor $ G,Q $ are woven frames with universal bounds $ C $ and $ D $.
Now, let $ V_{1}=\overline{\rm span}\left\lbrace e_{3i}, e_{3i+1}\right\rbrace  $ and $ P $ denotes the orthogonal projection from
$ \mathbb{R}^{3} $ onto $ V_{1} $. Then by Theorem $ \ref{T6} $,
$ \left\lbrace Pg_{i}\right\rbrace_{i=1}^{6}  $ and $ \left\lbrace Pq_{i}\right\rbrace_{i=1}^{6}  $ are woven frames for $ V_{1} $.\\
Also, suppose $ V_{2}=\overline{\rm span}\left\lbrace e_{3i+1}, e_{3i+2}\right\rbrace $. Then
$ V_{1}\cap V_{2}=\overline{\rm span}\left\lbrace e_{3i+1}\right\rbrace $. Let
$ P^{\prime} $ be an orthonormal projection of $ \mathbb{R}^{3} $ onto $ \overline{\rm span}\left\lbrace e_{3i+1}\right\rbrace $.
Thus by Corollary $ \ref{C7}$,
$ \left\lbrace P^{\prime}g_{i}\right\rbrace_{i=1}^{6}  $ and $ \left\lbrace P^{\prime}q_{i}\right\rbrace_{i=1}^{6}  $ are woven frames for
$ V_{1}\cap V_{2} $ with same bounds $ C $ and $ D $.
\end{example}

%\section{Acknowledgment}
%research project contract.

%%%%%%%%%%%%%%%%%%%%%%%%%%%%%%%%%%%%%%%%%%%
% References
%%%%%%%%%%%%%%%%%%%%%%%%%%%%%%%%%%%%%%%%%%%
\bibliographystyle{amsplain}
%%%%%%%%%%%%%%%%%%%%%%%%%%%%%%%%%%%%%%%%%%%
% Please cite your relevant papers but at most total 5 papers/books.
%%%%%%%%%%%%%%%%%%%%%%%%%%%%%%%%%%%%%%%%%%%

\end{document}